\documentclass[11pt,a4paper]{amsart}

\usepackage[marginparwidth=0pt,margin=20truemm]{geometry} 


\usepackage{mypackage} 
\usepackage{mycommand} 

\usepackage{comment}

\DeclareMathOperator{\Resultant}{Res}
\renewcommand{\Res}{\Resultant}
\DeclareMathOperator{\modulo}{mod}
\renewcommand{\mod}{\modulo}
\DeclareMathOperator{\LT}{LT}
\DeclareMathOperator{\Disc}{Disc}
\newcommand{\cyc}{\Phi^{\mathrm{cyc}}} 
\newcommand{\pe}{\mathfrak{p}}



\usepackage[pdfencoding=auto,hypertexnames=false]{hyperref}
\hypersetup{colorlinks=false}


\numberwithin{equation}{section} 
\usepackage{mytheoremeng} 

\begin{document}

\title[Multiplier polynomials]{Arithmetic properties of multiplier polynomials for certain polynomial maps}

\author[Y. Murakami]{Yuya Murakami}
\address{Faculty of Mathematics, Kyushu University, 744, Motooka, Nishi-ku, Fukuoka, 819-0395, JAPAN}
\email{murakami.yuya.896@m.kyushu-u.ac.jp}

\author[K. Sano]{Kaoru Sano}
\address{NTT Institute for Fundamental Mathematics, NTT Communication Science Laboratories, NTT Corporation, 2-4, Hikaridai, Seika-cho, Soraku-gun, Kyoto 619-0237, Japan}
\email{kaoru.sano@ntt.com}

\author[K. Takehira]{Kohei Takehira}
\address{Graduate School of Science, Tohoku University, 
	6-3, Aoba, Aramaki-aza, Aoba-ku, 
	Sendai, 980-8578, Japan}
\email{kohei.takehira.p5@dc.tohoku.ac.jp}

\date{\today}


\begin{abstract}
    We investigate the arithmetic properties of the multiplier polynomials for certain $1$-parameter families of polynomials.
    In particular, we prove integrality theorems of multiplier polynomials for $z^d+c$, $(z-c)z^d + c$ and $z^{d+1}+cz$.
    As a corollary, we obtain the uniform upper bound on the naive height of parabolic parameters of unicritical polynomials.
    Moreover, we determined the quadratic parabolic parameters for $z^2 + c$.
    We also conditionally list parabolic parameters for $z^2 + c$ of fixed degrees.
\end{abstract}

\maketitle


\tableofcontents


\section{Introduction} \label{sec: intro}

For a polynomial $f$ of degree $d$ over an integral domain $\mathcal{O}$ and a positive integer $k$,
$f^{\circ k}$ denotes the $k$-th iterated composition $f\circ f\circ \cdots \circ f$.
The $n$-th {\it dynatomic polynomial} $\Phi_{n}^\ast(z)$ (or $\Phi_{f,n}^{\ast}(z)$ for clarity) of $f$ is
\[
    \Phi_n^\ast(z) = \prod_{k|n} (f^{\circ k}(z) -z)^{\mu(n/k)},
\]
where $ \mu $ is the M\"{o}bius function
(see \cite[Theorem 4.5]{Silbook00}).

The degree of $\Phi_n^\ast(z)$ is $d_n = \sum_{k|n} d^k\mu(\frac{n}{k})$.
For a root $\alpha \in \overline{\Frac(\mathcal{O})}$ of $\Phi_n^\ast$, the multiplier $\omega_n(\alpha)$ is the derivative $(f^{\circ n})'(\alpha)$.
By the chain rule, it is written as
\begin{equation}\label{eq: multiplier}
    \omega_n(\alpha) = \prod_{i=0}^{n-1} f'(f^{\circ i}(\alpha)),
\end{equation}
where $f^{\circ 0}(z)$ is regarded as $z$.
The $n$-th {\it multiplier polynomial} $\delta_n(x)$ of $f(z) = \sum_{i=0}^d a_i z^i \in \mathcal{O}[z]$ is the monic polynomial in $x$ satisfying
\begin{equation}\label{eq: multiplier_polynomial}
    \delta_n(x)^n = \prod_{\Phi_n^\ast(\alpha)=0} (x-\omega_n(\alpha)),
\end{equation}
where $\delta_n$ is indeed an element of $\mathbb{Z}[a_i: 0\leq i \leq d][x]$ (cf \cite{Vivaldi-Hatjispyros}).
Let $\cyc_i(x)$ be the $i$-th cyclotomic polynomial.
For positive integers $m,n$ such that $m \mid n$ and $m<n$, the quantity $\Delta_{n,m}$ is defined by
\begin{equation}\label{eq: Delta}
    \Delta_{n,m} = \Res_x(\cyc_{n/m}(x), \delta_m(x)).
\end{equation}

In addition, we define the quantity $ \Delta_{n,n} $ by
\begin{equation}\label{eq: Delta_nn}
\delta_n(1) = \Delta_{n,n} \prod_{m \mid n, m\neq n} \Delta_{n,m}.
\end{equation}

In \cite[Section 3, Table 1]{Morton-Vivaldi}, the quantity $\Delta_{n,m}$ for $f(z) = z^2+c$ and $n,m \leq 6$ is listed.
Following the list, it is conjectured that $\Delta_{n,m}$ for $f(z) = z^2+c$ is a monic integral polynomial in $4c$ in \cite[Excercise 4.12 (c)**]{Silbook00}.
A generalization of this conjecture for unicritical polynomial maps is proved in Huguin's thesis \cite{Hug21}.
Our first main theorem is to reprove this conjecture using purely algebraic techniques. In particular, the behavior near infinity was analyzed without relying on calculus, instead utilizing the Newton polygon.

\begin{thm}\label{thm: unicritical_Silverman_Conj}
    Let $f_c(z) = z^d + c$ for an integer $d\geq 2$.
    Then for all integers $n, m$ with $m | n$ and  $m < n$,
    there is a polynomial $\Psi_{n,m}(t) \in \Z[t]$ such that
    \[
        \Delta_{n,m}(c) = \Psi_{n,m}(d^d c^{d-1}).
    \]
    Moreover, the polynomial $(-1)^{e_{n,m}}\Psi_{n,m}(t)$ is monic, where $d_m = \sum_{k|m} d^k \mu\left(\frac{m}{k}\right)$, $\varphi(n)$ is Euler's totient function, and
    \begin{align}
        e_{n,m} \coloneq
        \begin{dcases}
            \varphi(n) \frac{d_m}{m} & \text{ if } n \neq m \\
            \frac{d_n}{n} + \varphi(n) \sum_{m \mid n, m < n} \frac{d_m}{m} & \text{ if } n = m .
        \end{dcases}
    \end{align}
\end{thm}

When $n > 2$, we may ignore the contribution of terms with $\varphi(n)$ since $\varphi(n)$ is even.
\cref{thm: unicritical_Silverman_Conj} is an immediate consequence of the following theorem.
\begin{thm}[{$\risingdotseq$ \cite[Proposition 1.51]{Hug21}} ]\label{thm: unicritical_main}
    Let $f_c(z) = z^d + c$ for an integer $d\geq 2$.
    Then for all positive integers $m$, the multiplier polynomial $\delta_m(x)$ is in the ring $\Z[d^dc^{d-1},x]$
    and $(-1)^{\frac{d_m}{m}}\delta_m(x)$ is monic in $d^d c^{d-1}$.
\end{thm}
\begin{rem}\label{rem: difference_of_proofs}
\cref{thm: unicritical_main} is the same statement as Proposition 1.51 of Huguin's thesis \cite{Hug21} except for the explicit description of the sign.
Our proof of \cref{thm: unicritical_main} can be divided into two parts.
The first is the integrality of $\delta_m$, and the second is the monicness of $\delta_m$ in $d^d c^{d-1}$.
The first one is essentially the same as Huguin's proof \cite[Affirmation 1.52]{Hug21}.
Our proof of the monicness relies on Newton polygons, contrary to Huguin's proof, which uses the analysis around the point at infinity.
We can adapt the proof to the other classes of polynomial maps than unicritical polynomials as in \cref{thm: non_unicritical_main}.
\end{rem}

Fix a parameter $\gamma \in \C$. 
A periodic point $\alpha \in \C$ of $f_\gamma(z) = z^d + \gamma$ of period $m$ is \textit{parabolic} if $\omega_m(\alpha)$ is a root of unity.
A parameter $\gamma$ is said to be a \textit{parabolic parameter} if $f_\gamma$ has a parabolic periodic point.
Note that $\gamma$ is a parabolic parameter if and only if it is a root of $\Delta_{n,m}$ for some $n, m$ with $m|n$.
We investigate the arithmetic properties of parabolic parameters.

For $\gamma \in \overline{\Q}$, denote $H(\gamma)$ its absolute height as in \cref{subsec: number_fields}.
Combining \cref{thm: unicritical_Silverman_Conj} and the bound on the Multibrot set,
we get an upper bound on the height of parabolic parameters $c$ as follows.
\begin{thm} \label{thm: height_bound_unicritical}
    If $\gamma \in \C$ is a parabolic parameter, then $\gamma$ is in $\overline{\Q}$ and
    the inequality $H(\gamma) \le (2 d^d)^{1/(d-1)}$ holds.
\end{thm}

The following conjecture is a central problem in the arithmetic of dynamics.
\begin{conj}[{\cite{Morton-Silverman}}]\label{conj: Morton-Silverman}
    Let $N$ and $D$ be positive integers and $d\geq 2$ be an integer. Let $K$ be a number field.
    Then there is a constant $C(N,D,d,K)$ such that for any morphism $f\colon \bbP^N \longrightarrow \bbP^N$ of degree $d$ defined over $K$,
    the set
    \[
        \left\{ x \in \bbP^N(L)\ \middle|\ [L: K]\leq D, x\text{ has a finite }f \text{-orbit}\right\}
    \]
    has at most $C(N,D,d,K)$ elements.
\end{conj}
Assuming a generalization of the $abc$-conjecture called the $abcd$-conjecture, \cref{conj: Morton-Silverman} is proved in \cite{Looper21a} for unicritical polynomials, and in \cite{Looper21b} for general polynomials.
Even for quadratic polynomial maps, \cref{conj: Morton-Silverman} is still widely open.
The non-existence of rational periodic points of exact period $4$ or $5$ is shown in \cite{Morton98} and \cite{FPS97}, respectively.
Assuming the Birch--Swinerton-Dyer conjecture, the same statement for the exact period $6$ is proved in \cite{Stoll08}.

In this paper, we consider \cref{conj: Morton-Silverman} by focusing on parabolic orbit instead of finite orbit.
By \cref{thm: height_bound_unicritical} and Northcott property of the height function, the set of the parabolic parameter $c$ of degree at most $D$ is finite for all $D\in \Z_{>0}$. 
However, there are two difficulties to determine all parabolic parameters:
\begin{itemize}
    \item The number of candidates of defining polynomials of parabalic parameters with a bounded height increases exponentially fast as $D$ get large.
    \item Even if a candidate fixed, it is difficult to check that it is actually a parabolic parameter when it is close to the Mandelbrot set.
\end{itemize}
Using ideas described in \cref{sec: determination_of_parabolic_parameters}, we determined all quadratic parabolic parameters of $z^2 + c$ as follows.

\begin{thm} \label{thm: parabolic_rational_parameters}
    For $\gamma \in \overline{\Q}$ of degree at most $2$ over $\Q$, assume that $f_{\gamma}(z)= z^2+\gamma$ has a parabolic periodic point in $\C$.
    Then $\gamma$ is either
    \begin{align}
        & -\frac{7}{4}, -\frac{5}{4}, -\frac{3}{4}, \frac{1}{4},
        \frac{1 \pm 2\sqrt{-1}}{4}, \frac{-4 \pm \sqrt{-1}}{4},\\
        & \frac{-1 \pm 3\sqrt{-3}}{8}, \frac{3 \pm \sqrt{-3}}{8}, \frac{-7 \pm \sqrt{-3}}{8}, \text{ or } \frac{-9 \pm \sqrt{-3}}{8}.
    \end{align}
    Moreover, if $\Delta_{n,m}$ is irreducible as a polynomial in $c$ for all $m,n$ with $m \mid n$, the number of parabolic parameters $\gamma \in \C$ of a small degree over $\Q$ is as the following table.
    \begin{table}[h]
    \centering
    \begin{tabular}{|c|c|c|} \hline
        degree &  number of parabolic parameters & total \\ \hline
        $1$ & $4$  & $4$ \\
        $2$ & $12$ & $16$ \\
        $3$ & $6$  & $22$ \\
        $4$ & $32$ & $54$ \\
        $5$ & $0$  & $54$ \\ 
        $6$ & $72$ & $126$ \\
        $7$ & $0$  & $126$ \\ \hline
    \end{tabular}
    \caption{number of parabolic parameters of given degree (under irreducibility conjecture)}
    \label{tab: my_label}
\end{table}
\end{thm}
Note that the irreducibility of $\Delta_{n,m}$ is conjectured by Morton in \cite{Morton-Vivaldi}.
\begin{conj}[The irreducibility conjecture]\label{conj: irreducibility}
    For a family of the polynomial maps $f_c(z) = z^2+c$, the polynomials $\Delta_{n,m}$ in $c$ for positive integers $m,n$ with $m \mid n$ are all irreducible over $\Q$.
\end{conj}
\begin{rem}
    The parabolic parameters of degree at most $2$ are plotted in \cref{fig: Parabolic_parameters_of_degree_at_most_2}.
    \begin{figure}[ht]
	\centering
	\includegraphics[width = 12cm]{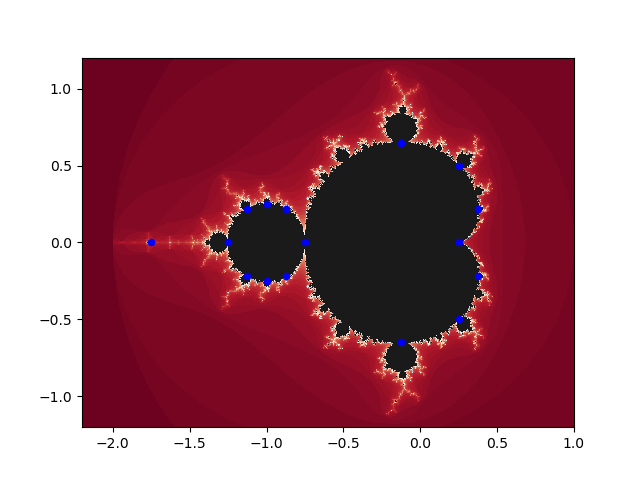}
    \caption{Parabolic parameters of degree at most 2.}
	\label{fig: Parabolic_parameters_of_degree_at_most_2}
\end{figure}
\end{rem}
Since the set of totally real parabolic parameters is completely determined in \cite{BK22}, we essentially determine imaginary quadratic parabolic parameters.

As mentioned in \cref{rem: difference_of_proofs}, by employing the Newton polygon, we are able to obtain similar integrality results for a broader class of polynomial families.
We prove a similar theorem for $z^{d+1}+ cz$ for $d\geq 1$.
This is the first example of a family of non-unicritical polynomials for which a kind of integrality and monicness of multiplier polynomials hold. 
\begin{thm} \label{thm: non_unicritical_main}
    Let $f_c(z) = z^{d+1} + cz$. For a positive integer $ m $, the polynomial $ d^{\deg_z \Phi_m^*/ (d+1)} \delta_m(x) $ is an element of $ \Z[dc, x] $ and monic in $ dc $ up to multiplication by $ \pm 1 $.
\end{thm}
\begin{thm} \label{thm: height_bound_non_unicritical}
    Let $f_c(z) = z^{d+1} + cz$. For $\gamma \in \C$, assume that $f_{\gamma}(z)$ has a parabolic periodic point in $\C$.
    Then $\gamma$ is in $\overline{\Q}$ and the inequality $H(\gamma) \le  (2d+2)^{1/d} (d+1) $ holds.
\end{thm}
\begin{rem}
    When we focus on the unicritical polynomial maps, the result \cite[Theorem 1]{Ingram19} tells us that for any $d\geq 2$, $n\geq 1$, there exist positive constants $A$ and $B$ such that for any $\lambda \in \overline{\Q}^{\times}$ and a polynomial $f \in \overline{\Q}[z]$ admitting $n$-periodic points of multiplier $\lambda$, the height $H(f)$ is bounded above by $B H(\lambda)^A$.
    While his original theorem holds for general polynomial maps of degree $d$ and any multipliers values, the constants $A$ and $B$ depend on the period $n$.
    From this perspective, although \cref{thm: height_bound_unicritical} and \cref{thm: height_bound_non_unicritical} are theorems for specific polynomial maps and specific multipliers, they have the advantage that the height bound is uniform over the period.
\end{rem}

It is natural to expect that a similar argument works for any $1$-parameter family of polynomials, but this is misleading.
As the following theorem shows, there are cases where the leading coefficient of $\Delta_{n,m}$ in the parameter grows as $n$ grows.
Thus, the uniform integrality for general $1$-parameter families cannot generally hold.

\begin{thm}\label{thm: other_family}
    For $ f_c(z) = z^{d+2} + cz^2 $, let $\Delta_{n,1}$ be the polynomial in $c$ defined above.
    Then the leading coefficient of $\Delta_{n,1}$ in $c$ is $\pm d^{d\varphi(n)} \cyc_n(2)$.
    In particular, for all $n \geq 7$, the leading coefficient has a prime divisor $q$ such that $q \equiv 1 \bmod n$.
\end{thm}

 Morton--Vivaldi shows the following theorem on the relation between $\Phi_n^*(z)$ and $\Delta_{n,m}$.
\begin{thm}[{\cite[Theorem 2.2]{Morton-Vivaldi}}] \label{thm: Morton-Vivaldi_resultant}
    Let $ f(z) $ be a monic polynomial over an integral domain $ \mathcal{O} $.
    Then, we have
    \[
        \Res_z \left( \Phi_n^*(z), \Phi_m^*(z) \right)
        = \pm \Delta_{n, m}^m
    \]
    for all positive integers $n$ and $m$ such that $n>m$ and $m \mid n$.
\end{thm}

\begin{rem}
    In \cite{Okuyama22}, \cref{thm: Morton-Vivaldi_resultant} is generalized to rational functions on the projective line.
\end{rem}

In the proof of \cref{thm: Morton-Vivaldi_resultant}, Morton--Vivaldi considered the expression
\[
    \Res_z \left( \Phi_m^*(z), \Phi_k^*(z) \right)
    = (\text{unit}) \cdot \prod_{\alpha \in Z(\Phi_k^*)} \cyc_{m/k} \left( \left( f^{\circ k} \right)' (\alpha) \right)
\]
in \cite[p. 575, line 14]{Morton-Vivaldi},
where $\cyc_{n}$ is the $n$-th cyclotomic polynomial.
In connection, we give the following equalities on dynatomic modular curves.

\begin{prop} \label{prop: equality_main}
	Let $ f(z) $ be a monic polynomial over an integral domain $ \mathcal{O} $ and $ k $ and $ m $ be positive integers such that $ k \mid m $ and there exists a prime number $p \mid m$ with $p \nmid k$.
	Let $ \alpha \in \overline{\Frac (\mathcal{O})} $ be a periodic point of $ f $ of exact period $ k $ and $ \lambda \coloneqq \omega_k(\alpha) $.
	We denote the $ k $-part and the prime to $ k $-part of $ m $ by
	\[
    	\widetilde{m} \coloneqq \prod_{p \mid k} p^{v_p(m)}, \quad
    	m' \coloneqq \prod_{p \nmid k} p^{v_p(m)} = \frac{m}{\widetilde{m}} > 1
	\]
	respectively, where $ v_p $ is the $ p $-adic valuation with respect to a prime number $ p $.
	Then, we have
	\[
    	\prod_{d \mid \widetilde{m}} \Phi_{f, dm'}^* (\alpha)
    	= \Phi_{f^{\circ \widetilde{m}}, m'}^* (\alpha)
    	= \cyc_{m'} \left( \lambda^{\widetilde{m}/k} \right).
	\]
\end{prop}


\subsection*{Organization of this paper}
\cref{sec: preliminaries} is devoted to preparing notation and some fundamental facts on valuations and complex dynamics.

We prove the theorems on unicritical polynomials $z^d+c$ in \cref{sec: unicritical}.
The fact that $\delta_m(x)$ is in the ring $\Z[d^d c^{d-1}, x]$ is proved in \cref{subsec: unicritical_Integrality}.
In \cref{subsec: unicritical_valuations}, we calculate the $c^{-1}$-adic valuation of roots of $\Phi_{n}^\ast$ and their multipliers through the discussion of Newton polygon.
Using these calculations, we show the monicness of $\delta_m(x)$ in $d^dc^{d-1}$ in \cref{subsec: unicritical_monicness}.
In \cref{subsec: unicritical_integrality_with_fixed_multipliers}, we prove more general results than \cref{thm: unicritical_Silverman_Conj} as a corollary of the results established in the previous sections.
We prove the theorems on non-unicritical polynomials $z^{d+1} + cz$ in \cref{sec: non_unicritical}.
In \cref{subsec: non_unicritical_replace}, we reduce the assertion to \cref{prop: non_unicritical_resultant} on the map $\tilde{f}(z) = z^{d+1} - cz^d +c$ by dividing by the automorphisms of $z^{d+1}+cz$.
\cref{prop: non_unicritical_resultant} asserts that $d^{m((d+1)^{k-1}-1)/d}R_{k,m}(x)$ is in the ring $\Z[dc,x]$ and is monic in $dc$ up to multiplication by $\pm 1$,
where $R_{k,m}(x)$ is the resultant of some polynomials.
In \cref{subsec: non_unicritical_integrality}, we show that $d^{m((d+1)^{k-1}-1)/d}R_{k,m}(x)$ is in the ring $\Z[dc,x]$.
In \cref{subsec: non_unicritical_valuation}, we calculate the valuations of quantities involving the $n$-th periodic points of $\tilde{f}$.
In \cref{subsec: non_unicritical_monic}, we prove the statement on the monicness of \cref{prop: non_unicritical_resultant} 

In \cref{subsec: height_bound_unicritical} and \cref{subsec: height_bound_non-unicritical}, we give the upper bound on the heights of the parabolic parameters of the polynomial maps $z^d+c$ and $z^{d+1}+az$, respectively.
We estimate the degree of $\Delta_{n,m}$ in \cref{subsec: deg_estimation}.
Using these height bounds and estimation and case-by-case analysis, the parabolic parameters of degree at most $2$ for $z^2+c$ are determined in \cref{subsec: unconditional_determination_of_parabolic_parameters}.
Under the conjecture that $\Delta_{n,m}$ are irreducible for all $n,m$, we give the number of parabolic parameters of small degrees over $\Q$ in \cref{subsec: Parabolic parameters under the irreducibility conjecture}.


We prove \cref{prop: equality_main} in \cref{sec: equality}.
We give lists of computation of $\delta_m$ and $\Delta_{n,m}$ for some polynomials in \cref{sec: list_of_delta_and_Delta}.
In \cref{sec: Newton_polygon}, we give the complete description of Newton polygon of some polynomials, parts of which are proved in \cref{subsec: non_unicritical_valuation} and used in \cref{subsec: non_unicritical_monic}.


\section*{Acknowledgement}


The authors would like to thank Takashi Taniguchi, Kazunari Sugiyama, and Yasuhiro Ishitsuka, who organized Number Theory Summer School 2023 and gave them a chance to discuss the topic of this paper.
They also would like to thank Rob Benedetto, Valentin Huguin, Ben Hutz, Patrick Ingram, Y\^usuke Okuyama, Joe Silverman, Takayuki Watanabe, and Geng-Rui Zhang for giving some comments on the recent research of parabolic parameters.
They must express our gratitude to the anonymous referee for providing numerous comments and suggestions for improvements on the earlier version of this paper.
The first author would like to thank Masanobu Kaneko for giving many comments.
The second author would like to thank Hiroyuki Inou for giving some knowledge on numerically verified computation.
The third author would like to thank Nobuo Tsuzuki for his helpful comments.
The first author is supported by JSPS KAKENHI Grant Number JP23KJ1675.
The second author was supported by JSPS KAKENHI Grant Number JP20K14300 until March 2024.
The third author is financially supported by JSPS KAKENHI Grant Number JP 22J20227 and the WISE Program for AI Electronics, Tohoku University.


\section{Preliminaries}\label{sec: preliminaries}

Let $K$ be a field and $F, G$ be polynomials over $K$.
Denote $Z(F)$ the multiset of its roots in $\overline{K}$ counted with multiplicity.
Write $F(z) = \sum_{i=0}^n a_i z^i$ and $G(z) = \sum_{j=0}^m b_j z^j$ with $a_n \neq 0$ and $b_m\neq 0$,
Then, the resultant of $F$ and $G$ with respect to $z$ is the quantity
\begin{align}
    \Res_z(F,G) &= a_n^m b_m^n \prod_{\alpha\in Z(F)}\prod_{\beta\in Z(G)}(\alpha - \beta)\\
    &= a_n^m \prod_{\alpha \in Z(F)} G(\alpha).
\end{align}

\begin{lem} \label{lem: Res}
    Let $F$ and $G$ as above.
    \begin{enumerate}
    \item \label{item: lem: Res: det}
    The quantity $\Res_z(F, G)$ is equal to the following determinant of $(m+n)\times(m+n)$ matrix
    \[
    \begin{vmatrix}
        a_n     & a_{n-1}   & \cdots    & \cdots    & a_0       &           &           &\\
                & a_n       & a_{n-1}   & \cdots    & \cdots    & a_0       &           &\\
                &           & \ddots    &           &           &           & \ddots    &\\
                &           &           & a_n       & a_{n-1}   & \cdots    & \cdots    & a_0\\
        b_m     & b_{m-1}   & \cdots    & \cdots    & b_0       &           &           &\\
                & b_m       & b_{m-1}   & \cdots    & \cdots    & b_0       &           &\\
                &           & \ddots    &           &           &           & \ddots    &\\
                &           &           & b_{m}     & b_{m-1}   & \cdots    & \cdots    & b_0
    \end{vmatrix},
    \]
    where the first $m$ row vectors are coefficients of $F$, and the latter $n$ row vectors are coefficients of $G$.
    \item \label{item: lem: Res: base_change}
    Let $\phi \in K[z]$ be a non-constant monic polynomial. Then we have
    \[
    \Res_z(F\circ \phi, G \circ \phi) = \Res(F,G)^{\deg \phi}.
    \]
    \end{enumerate}
\end{lem}

\subsection{Fundamental facts on Number fields}
\label{subsec: number_fields}


For a number field $K$, let $M_K$ be the set of standard absolute values on $K$ whose restriction to $\Q$ is a $p$-adic absolute value or the archimedean absolute value $|\cdot|_{\infty}$.
We write $M_K^{\infty}$ for the set of archimedean absolute values of $K$ lying above $|\cdot|_\infty$ and $M_K^0$ for the set of non-archimedean absolute values of $K$ lying above the $p$-adic absolute values of $\Q$.
For $v\in M_K$, let $K_v$ denote the completion of $K$ at $v$.
Let $ n_v $ denote the local degree at $ v $. Specifically, if $ v $ lies above the $ p $-adic valuation associated with a prime $ p $, then $ n_v = [K_v : \mathbb{Q}_p] $. If $ v $ lies above the archimedean absolute value $ |\cdot|_{\infty} $ of $ \mathbb{Q} $, then $ n_v = [K_v : \mathbb{R}] $.
For $a \in K$, we set $\| a \|_v \coloneq |a|_v^{n_v}$.
The (absolute) {\it height function} $H\colon K\longrightarrow \R$ is defined by
\[
H(a) \coloneq \left(\prod_{v\in M_K} \max\{1, \|a\|_v\}\right)^{\frac{1}{[K : \Q]}}.
\]
for $a\in K$.
The following theorems yield the well-definedness of $H$ as a function on $\bbP^1(\overline{\Q})$.
\begin{thm}[{cf \cite[Section II.1]{Langbook94}}]
    Let $L/K/\Q$ be a tower of number fields and $v\in M_K$ be an absolute value on K.
    Then
    \[
        \frac{1}{[L: K]} \sum_{\substack{w\in M_L\\ w|v}} n_w = n_v.
    \]
\end{thm}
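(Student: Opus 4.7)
The plan is to reduce the claim to the classical degree formula
\[
\sum_{w \mid v} [L_w : K_v] = [L : K],
\]
and then apply the tower law for local degrees. Concretely, for each place $w$ of $L$ lying above $v$, let $p$ be the rational prime below $v$ (and hence below $w$). The tower $\Q_p \subset K_v \subset L_w$ gives
\[
n_w \;=\; [L_w : \Q_p] \;=\; [L_w : K_v] \cdot [K_v : \Q_p] \;=\; [L_w : K_v]\, n_v.
\]
Summing over $w \mid v$ and invoking the degree formula yields $\sum_{w \mid v} n_w = n_v [L:K]$, which is the theorem after dividing by $[L:K]$. The archimedean case is handled identically with $\Q_p$ replaced by $\R$.

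To establish the degree formula, I would use the $K_v$-algebra decomposition
\[
L \otimes_K K_v \;\cong\; \prod_{w \mid v} L_w.
\]
Pick a primitive element $\theta$ for $L/K$ with minimal polynomial $f(X) \in K[X]$, and factor $f$ into distinct irreducibles $f = \prod_i f_i$ over $K_v$ (the factors are distinct by separability of $L/K$). The Chinese Remainder Theorem gives
\[
L \otimes_K K_v \;\cong\; K_v[X]/(f) \;\cong\; \prod_i K_v[X]/(f_i),
\]
and each factor $K_v[X]/(f_i)$ on the right is a finite extension of $K_v$ carrying a canonical absolute value extending $|\cdot|_v$, so it is one of the completions $L_w$ for some $w \mid v$. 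Comparing $K_v$-dimensions then yields $[L:K] = \sum_{w \mid v} [L_w : K_v]$.

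The only nontrivial step is the identification of the factors $K_v[X]/(f_i)$ with the completions $L_w$, equivalently verifying that the places of $L$ above $v$ are in bijection with the irreducible factors of $f$ over $K_v$. This is a classical result, treated in \cite[Section II.1]{Langbook94}, so in practice one simply cites it. The expected obstacle is thus purely organizational rather than substantive: once the tensor product decomposition is in hand, the theorem follows by a two-line dimension count combined with the tower law.
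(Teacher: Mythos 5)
The paper gives no proof of this statement; it is quoted as a standard fact with a pointer to Lang's book, so there is nothing internal to compare against. Your argument is the standard textbook proof of exactly that cited result: the tower law $n_w = [L_w:K_v]\,n_v$ reduces everything to the degree formula $\sum_{w\mid v}[L_w:K_v]=[L:K]$, which you obtain from the decomposition $L\otimes_K K_v\cong\prod_{w\mid v}L_w$ via a primitive element and the (correctly invoked) separability of $L/K$ in characteristic zero. The proof is correct, including the remark that the archimedean case runs identically with $\Q_p$ replaced by $\R$; the one step you defer --- the bijection between places above $v$ and irreducible factors of the minimal polynomial over $K_v$ --- is precisely the content of the passage of Lang that the paper cites, so citing it there is appropriate.
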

\begin{thm}[{cf \cite[Section V.1]{Langbook94}}] \label{thm: product_formula}
    Let $K$ be a number field.
    Then, for all $ a \in K^{\times}$, we have
    \[
        \prod_{v \in M_K} \|a\|_v = 1.
    \]
\end{thm}

The following theorem is a specific version of a well-known theorem called the Northcott property, which is used to prove finiteness theorems in Diophantine geometry.
\begin{thm}\label{thm: Northcott_property}
    Let $K$ be a number field.
    Then for any integer $D\geq 1$ and any constant $B > 1$, the set
    \[
        \left\{ a \in \overline{K} \ |\ [K(a): K] \leq D \text{ and } H(a) < B \right\}
    \]
    is a finite set.
\end{thm}

\subsection{Fundamental facts on Complete valuation fields}
\label{subsec: complete_valuation_fields}

In the latter sections, we use the facts written in this section for $K_0((c^{-1}))$ and the $c^{-1}$-adic absolute value, where $K_0$ is some field and $K_0((t))$ is the field of Laurent series on $t$.
Note that $K_0((t))$ is the fraction field of the ring of formal power series.

Throughout this section, let $K$ be a complete non-archimedean valuation field for an additive valuation $v$.

\begin{thm}[{cf \cite[Chapter II, (4.8) Theorem]{Neukirchbook99}}] \label{thm: Valuations_unique_ext}
    For any algebraic extension $L/K$, a unique extension $w$ of $v$ to $L$ exists.
    Moreover, when $L/K$ has a finite degree $n$, it is given by the formula
    \[
        w(a) = \frac{1}{n} v \left( N_{L/K}(a)\right)
    \]
    for $a\in L$, where $N_{L/K}\colon L \longrightarrow K$ is the norm map.
\end{thm}

\begin{dfn}
    Let $F(x) = \sum_{i=0}^{n} a_i x^i \in K[x]$ be a polynomial satisfying $a_n \neq 0$.
    To each term $a_i x^i$, we associate a point $(i,v(a_i))\in \R^2$, where we regard $(i,\infty)$ when $a_i = 0$.
    The Newton polygon of $F$ is the lower convex envelope of the set of the points
    \[
        \{ (i,v(a_i))\ |\ 0\leq i \leq n\}.
    \]
    When $a_i = 0$ for all $0\leq i \leq k-1$ and $a_{k}\neq 0$, we regard that the Newton polygon contains the line segment of slope $-\infty$ and of horizontal length $k$.
\end{dfn}

\begin{thm}[{cf.~\cite[Chapter II, (6.3) Proposition]{Neukirchbook99}}]
    Let $f(x) = \sum_{i=0}^n a_ix^i \in K[x]$ be a polynomial with $a_n\neq 0$.
    Let $L$ be the splitting field of $f$ over $K$ and $w$ be the unique extension of $v$.
    If a line segment of slope $-m$ and of horizontal length $r$ occurs in the Newton polygon of $f$, then $f(x)$ has precisely $r$ roots $\alpha_1, \ldots, \alpha_r \in L$ with the valuation
    \[
        w(\alpha_1) = \cdots = w(\alpha_r) = m.
    \]
\end{thm}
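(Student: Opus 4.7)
The plan is to read the Newton polygon off the elementary symmetric functions of the roots. First I would factor $f(x)=a_n\prod_{i=1}^{n}(x-\alpha_i)$ over $L$ and observe that replacing $f$ by $a_n^{-1}f$ only translates the Newton polygon vertically, so I may assume $f$ is monic; then $a_{n-k}=(-1)^k e_k(\alpha_1,\ldots,\alpha_n)$. I order the roots so that $w(\alpha_1)\le w(\alpha_2)\le\cdots\le w(\alpha_n)$, using the convention $w(0)=+\infty$ to handle any roots equal to zero (which are what the slope $-\infty$ segment detects).

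The key estimate is the ultrametric bound
\[
 w(e_k)\;\ge\;\min_{|S|=k}\sum_{i\in S} w(\alpha_i)\;=\;w(\alpha_1)+\cdots+w(\alpha_k),
\]
since a valuation of a sum is at least the minimum valuation of a summand. Equality holds whenever the minimum is attained by a unique subset, which occurs precisely at the ``jump indices'' $k$ with $w(\alpha_k)<w(\alpha_{k+1})$ (or $k=n$): then $S=\{1,\ldots,k\}$ is the unique minimizer and no cancellation can occur. Consequently each point $\bigl(n-k,\,w(\alpha_1)+\cdots+w(\alpha_k)\bigr)$ lies on or below the plotted point $\bigl(n-k,\,v(a_{n-k})\bigr)$, and the two coincide at every jump index.

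Next I would join the lower-bound points for $k=0,1,\ldots,n$ by straight segments. Reading left to right in the $(i,v(a_i))$-plane, the slopes of this piecewise linear curve are $-w(\alpha_n),-w(\alpha_{n-1}),\ldots,-w(\alpha_1)$, which are non-decreasing, so the curve is convex. Because it agrees with the plot of $(i,v(a_i))$ at every vertex and lies weakly below it elsewhere, it must coincide with the lower convex envelope, i.e.\ the Newton polygon of $f$. Consecutive equal slopes merge into one segment, so a segment of slope $-m$ has horizontal length equal to the number of $i$ with $w(\alpha_i)=m$, which is exactly the claim.

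The main obstacle is handling the possible cancellation in $e_k$ when several consecutive roots share the same valuation. The argument above circumvents this by extracting the valuation of $e_k$ only at jump indices, where the minimizing subset is unique; at intermediate indices the inequality may be strict, but that is consistent with the corresponding point $\bigl(n-k,\,v(a_{n-k})\bigr)$ lying strictly above the polygon rather than at a vertex. The edge case of roots equal to $0$, corresponding to a slope $-\infty$ segment, is folded into the same framework by the convention $w(0)=+\infty$.
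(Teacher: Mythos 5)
The paper does not prove this statement; it is quoted from Neukirch as a known fact. Your argument is correct and is essentially the standard proof given in that reference: reading $v(a_{n-k})$ off the elementary symmetric functions, noting equality in the ultrametric bound exactly at the jump indices where the minimizing subset is unique, and identifying the resulting convex piecewise-linear curve with the lower convex envelope.
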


For $F = \sum_{i = m}^{\infty} a_i t^i \in K_0((t))$ with $a_{m}\neq 0$, we set $v_t(F) = m$.
Then, $K_0((t))$ is complete with respect to $v$.
For a polynomial $F\in K_0[c] \subset K_0((c^{-1}))$, we have
\[
\deg_c(F) = -v_{c^{-1}}(F).
\]
We need the following construction to prove \cref{thm: non_unicritical_main} in \cref{sec: non_unicritical}.
Let $ K_0 $ be a field, and $ v $ be an additive valuation on $ K_0 $. 
Then we can construct an additive valuation $ \tilde{v} $ on $ K_0(t) $ by 
\[ \tilde{v}\left( \sum_{i=0}^{n}a_i t^i \right) = \inf_{0 \le i \le n} v(a_i), \]
where $ a_i \in K_0$.

For example, by applying the above construction for $ \overline{\Q((c^{-1}))} \subset \overline{\Q((c^{-1}))}(y) \subset \overline{\Q((c^{-1}))}(x,y) $, we obtain an additive valuation $ v $ on $ \overline{\Q((c^{-1}))}(x,y) $ which satisfies
\begin{itemize}
    \item $ v = -\deg_c $ on $ \Q[c,x,y] $,
    \item $ v(x + f) = \min\{0, v(f)\} $ for all $f \in \overline{\Q((c^{-1}))}(y)$, and 
    \item $ v(y + \alpha) = \min\{0, v_{c^{-1}}(\alpha) \} $ for all $ \alpha \in \overline{\Q((c^{-1}))} $.
\end{itemize}

\subsection{Fundamental facts on dynamics}
\label{subsec: Dynamics}

Let $\alpha\in \C$ is a $n$-periodic point of $f(z)\in \C[z]$.
If the multiplier $\omega_n(\alpha)$ is a root of unity, we say that $\alpha$ is a {\it parabolic periodic point}.
If $|\omega_n(\alpha)| < 1$, we say that $\alpha$ is an {\it attracting periodic point}.
For such a periodic point, the following theorem is well-known.
\begin{thm}[{cf \cite[Theorem 8.6, 10.15]{Milnorbook06}}] \label{thm: parabolic_attracting_critical}
    Let $f$ be a rational map on $\bbP^1$ of degree $d\geq 2$.
    Let $\alpha \in \bbP^1(\C)$ be a parabolic or attracting $n$-periodic point of $f$.
    Then there is a critical point $\beta$ of $f$ satisfying
    \[
    \lim_{m\to\infty} f^{\circ (nm+i)}(\beta)= \alpha
    \]
    for some $0 \leq i \leq n-1$.
\end{thm}
For a polynomial map $f_c(z) = z^{d} + c$ with $d\geq 2$, we say that $\gamma \in \C$ is a {\it parabolic parameter} if $f_{\gamma}$ has a parabolic periodic point in $\C$.
We set
\[
    \mathbb{M}_d \coloneq \{\gamma \in \bbC \colon (f_\gamma^{\circ n}(0))_{n=0}^{\infty} \text{ is bounded}\}.
\]
Sometimes $ \mathbb{M}_d $ is called \textit{Multibrot set}.
By \cref{thm: parabolic_attracting_critical}, parabolic parameters are in $\mathbb{M}_d$.
Moreover, they are known to lie at the nodes of $\mathbb{M}_d$.

\section{Multiplier polynomials for $z^d+c$} \label{sec: unicritical}


In this section, we fix an integer $d \geq 2$ and consider a polynomial map $f_c(z) \coloneqq z^d + c$ with a parameter $c$.
This section aims to prove \cref{thm: unicritical_main}.
We prove that $\delta_m(x)$ $(m\geq 1)$ is in $\Z[d^dc^{d-1}, x]$ in \cref{prop: unicritical_integrality}.
The monicness of $\delta_m(x)$ in $d^d c^{d-1}$ is proved in \cref{prop: unicritical_monicness}.


\subsection{Integrality}
\label{subsec: unicritical_Integrality}

In this subsection, we show the part of the statement in \cref{thm: unicritical_main} that $\delta_m(x)$ is an element of $\Z[d^dc^{d-1}, x]$.
Although the following lemma appears in Huguin's thesis \cite{Hug21}, we describe a proof for the reader's convenience.

\begin{lem}[{\cite[Affirmation 1.52]{Hug21}}]\label{lem: unicritical_integrality_of_differentials}
	For a positive integer  $k$, let $F_k(z) := f_c^{\circ k}(z) - z$. Then, the following statements hold.
	\begin{enumerate}
		\item \label{item: lem: unicritical_conversion_of_indeterminates}
		The polynomial $d^{d^k/(d-1)}F_k(z/d^{1/(d-1)})$ is in the ring $\Z[d^{d/(d-1)}c, z]$ and is monic in $z$.
		\item \label{item: lem: unicritical_integrality_of_differentials}
		For $\alpha \in Z(F_k)$, the element $d\alpha^{d-1}$ is integral over $\Z[d^{d/(d-1)}c]$.
	\end{enumerate}
\end{lem}

\begin{proof}
	To prove the statements of \cref{item: lem: unicritical_conversion_of_indeterminates}, it is enough to show that $d^{d^k/(d-1)}f_c^{\circ k}(z/{d^{1/(d-1)}})$ is in the ring $\Z[d^{d/(d-1)}c, z]$ and is monic in $z$.
	We prove the statements by induction on $k$.
	For $k = 1$, since we have $d^{d/(d-1)} f_c(z/d^{1/(d-1)}) = z^d + d^{d/(d-1)}c$, the statement holds.
	Assume the statement for $k$.
	Then
	\[
	d^{\frac{d^{k+1}}{d-1}} f_c^{\circ (k+1)}\left( \frac{z}{d^{\frac{1}{d-1}}} \right)
	= \left(d^{\frac{d^k}{d-1}} f_c^{\circ k} \left( \frac{z}{d^{\frac{1}{d-1}}}\right)\right)^d + d^{\frac{d^{k+1}-d}{d-1}}\cdot d^{\frac{d}{d-1}}c
	\]
	is in the ring $\Z[d^{d/(d-1)}c, z]$ and is monic in $z$ by the assumption.
	
	\cref{item: lem: unicritical_integrality_of_differentials}.
	For $\alpha \in Z(F_k)$ let $\beta \coloneq d^{1/(d-1)}\alpha$.
	Since we have $d^{d^k/(d-1)}F_k(\beta/d^{1/(d-1)}) = d^{d^k/(d-1)}F_k(\alpha) = 0$, the element $\beta$ is integral over $\Z[d^{d/(d-1)}c]$ by \cref{item: lem: unicritical_conversion_of_indeterminates}.
	Thus $d\alpha^{d-1} = \beta^{d-1}$ is also integral over $\Z[d^{d/(d-1)}c]$.
\end{proof}

\begin{lem}\label{lem: unicritical_integrality_of_R}
	Let $k\geq 1$ and $m\geq 1$.
	For a polynomial $G\in \Z[x, x_0,x_1,\ldots, x_{m-1}]$, define $R_{k}(G) \coloneq \Res_z(f_c^{\circ k}(z) - z, G(x, dz^{d-1}, df_c(z)^{d-1}, \ldots , df_c^{\circ(m-1)}(z)^{d-1}) \in \Z[c, x]$.
	Then, the following statements hold.
	\begin{enumerate}
		\item \label{item: lem: unicritical_integrality_of_coefficients}
		All coefficients of $ R_{k}(G) \in \Z[c][x]$ in the indeterminate $x$ are integral over $\Z[d^{d/(d-1)}c]$.
		\item \label{item: lem: unicritical_invariance_of_coefficients}
		The polynomial $R_{k}(G)$ is in the ring $\Z[d^d c^{d-1}, x]$.
	\end{enumerate}
\end{lem}


\begin{proof}
	\cref{item: lem: unicritical_integrality_of_coefficients}
	By \cref{lem: unicritical_integrality_of_differentials} \cref{item: lem: unicritical_integrality_of_differentials}, $d \alpha^{d-1}$ is integral over $\Z[d^{d/(d-1)} c]$ for all $\alpha \in Z(F_k)$.
	Since all coefficients of $R_{k}(G)$ in $x$ are given with multiplication and addition of these values, they are integral over $\Z[d^{d/(d-1)}c]$.
	
	\cref{item: lem: unicritical_invariance_of_coefficients}
	Let $\zeta = \zeta_{d-1}$ be a $d-1$-th root of unity and $\nu\colon \bbP^1\longrightarrow \bbP^1$ be the multiplication by $\zeta$.
	Then, since the diagram
	\begin{equation}
		\begin{tikzcd}
			\bbP^1 \arrow[r, "\nu"] \arrow[d, "f_c"] & \bbP^1 \arrow[d, "f_{\zeta c}"] \\
			\bbP^1 \arrow[r, "\nu" '] & \bbP^1
		\end{tikzcd}
	\end{equation}
	commutes, $\nu$ induces a bijection between $Z(f_c^{\circ k} - z)$ and $Z(f_{\zeta c}^{\circ{k}} - z)$.
	Thus we have
	\begin{align}
		R_{k}(G)|_{c\mapsto \zeta c} &= \prod_{\alpha' \in Z(f_{\zeta c}^{\circ k} - z)} G(x, d\alpha'^{d-1}, df_{\zeta c}(\alpha')^{d-1}, \ldots, df_{\zeta c}^{\circ (m-1)}(\alpha')^{d-1})\\
		&= \prod_{\alpha \in Z(f_c^{\circ k} - z)} G(x, d(\zeta \alpha)^{d-1}, d (\zeta f_c(\alpha))^{d-1}, \ldots, d (\zeta f_c^{\circ (m-1)}(\alpha))^{d-1})\\
		&= \prod_{\alpha \in Z(f_c^{\circ k} - z)} G(x, d\alpha^{d-1}, d f_c(\alpha)^{d-1}, \ldots, df_c^{\circ (m-1)}(\alpha)^{d-1})\\
		&= R_k(G).
	\end{align}
	Hence all the coefficients of $R_k(G) \in \Z[c][x]$ as a polynomial of $x$ are elements of $\Z[c^{d-1}]$ and integral over $\Z[d^{d/(d-1)} c]$ by  \cref{item: lem: unicritical_integrality_of_coefficients}.
	Since $\Z[d^{d/(d-1)}c]$ is normal, the polynomial $R_k(G)$ is in $\Z[c^{d-1}][x]\cap \Z[d^{d/(d-1)}c][x] = \Z[d^d c^{d-1},x]$.
\end{proof}

\begin{prop}\label{prop: unicritical_integrality}
	For any positive integer $m \geq 1$, the multiplier polynomial $\delta_m(x)$ is in $\Z[d^dc^{d-1}, x]$.
\end{prop}

\begin{proof}
	Applying \cref{lem: unicritical_integrality_of_differentials} \cref{item: lem: unicritical_integrality_of_coefficients}
	to $G = x - \prod_{i=0}^{m-1} x_i$,
	we see that $\Res_z(f_c^{\circ k}(z)-z, x-(f_c^{\circ m})'(z))$ is in $\Z[d^dc^{d-1}, x]$.
	It is known that $\delta_m(x)^m = \prod_{k|m}\Res_z(f_c^{\circ k}(z)-z, x-(f_c^{\circ m})'(z))^{\mu(m/k)}$ is in $\Z[c,x]$ by \cite{Morton-Vivaldi}.
	Since each factor $\Res_z(f_c^{\circ k}(z)-z, x-(f_c^{\circ m})'(z))$ is a polynomial in $c^{d-1}$ and $x$, $\delta_m(x)^m$ is as well.
	By Gauss' lemma, $\delta_m(x)^m$ is a polynomial in $d^dc^{d-1}$ and $x$. Here, we note that $\Z[c^{d-1},x]\subset \Q(d^d c^{d-1},x) = \Frac(\Z[d^d c^{d-1},x])$.
	Since the ring $\Z[d^d c^{d-1}, x]$ is normal, $\delta_m(x)$ is in this ring.
\end{proof}


\subsection{Valuations of roots}
\label{subsec: unicritical_valuations}


In this subsection, we study valuations of roots of $f_c^{\circ k}(z)-z$ and $\delta_m(x)$.
Our primary tool is Newton polygons.
The content of this section might be well-known to experts, but we write them for the reader's convenience.

Let $v\colon \Q((c^{-1}))\longrightarrow \Z$ be the $c^{-1}$-adic valuation.
We also denote by $v$ the unique extension of $v$ to the algebraic closure $\overline{\Q((c^{-1}))}$.

\begin{lem}\label{lem: unicritical_valuation_of_roots}
	The following statements hold for all positive integers $k$ and $m$.
	\begin{enumerate}
		\item \label{item: lem: unicritical_NP_of_iterates}
		The Newton polygon of $f_c^{\circ k}(z) \in \Q((c^{-1}))[z]$ consists of one line segment of slope $1/d$ connecting the points $(0,-d^{k-1})$ and $(d^k,0)$ as shown in \cref{fig: Newton_polygon_slope_only_1_over_d}.
		\item \label{item: lem: unicritical_NP_of_F}
		The Newton polygon of $f_c^{\circ k}(z) - z \in \Q((c^{-1}))[z]$ consists of one line segment of slope $1/d$ connecting the points $(0,-d^{k-1})$ and $(d^k,0)$ as shown in \cref{fig: Newton_polygon_slope_only_1_over_d}.
		\item \label{item: lem: unicritical_NP_of_R}
		The Newton polygon of $\Res_z(f_c^{\circ k}(z)-z, x - (f_c^{\circ m})'(z)) \in \Q((c^{-1}))[x]$ consists of one line segment of slope $m(d-1)/d$ connecting the points $(0,-m(d-1)d^{k-1})$ and $(d^k,0)$ as shown in \cref{fig: Newton_polygon_slope_only_m_d-1_over_d}.
		\item \label{item: lem: unicritical_NP_of_delta}
		The Newton polygon of $\delta_m(x) \in \Q((c^{-1}))[x]$ consists of one line segment of slope $m(d-1)/d$ connecting the points $(0,-\sum_{k|m} (d-1)d^{k-1}\mu(m/k))$ and $(\frac{1}{m} \sum_{k|m} d^k \mu(m/k),0)$ as shown in \cref{fig: Newton_polygon_slope_only_m_d-1_over_d}.
	\end{enumerate}
\end{lem}

\begin{figure}[bthp]
	\begin{minipage}[bthp]{0.45\linewidth}
		\centering
		\includegraphics[width = 7.5cm]{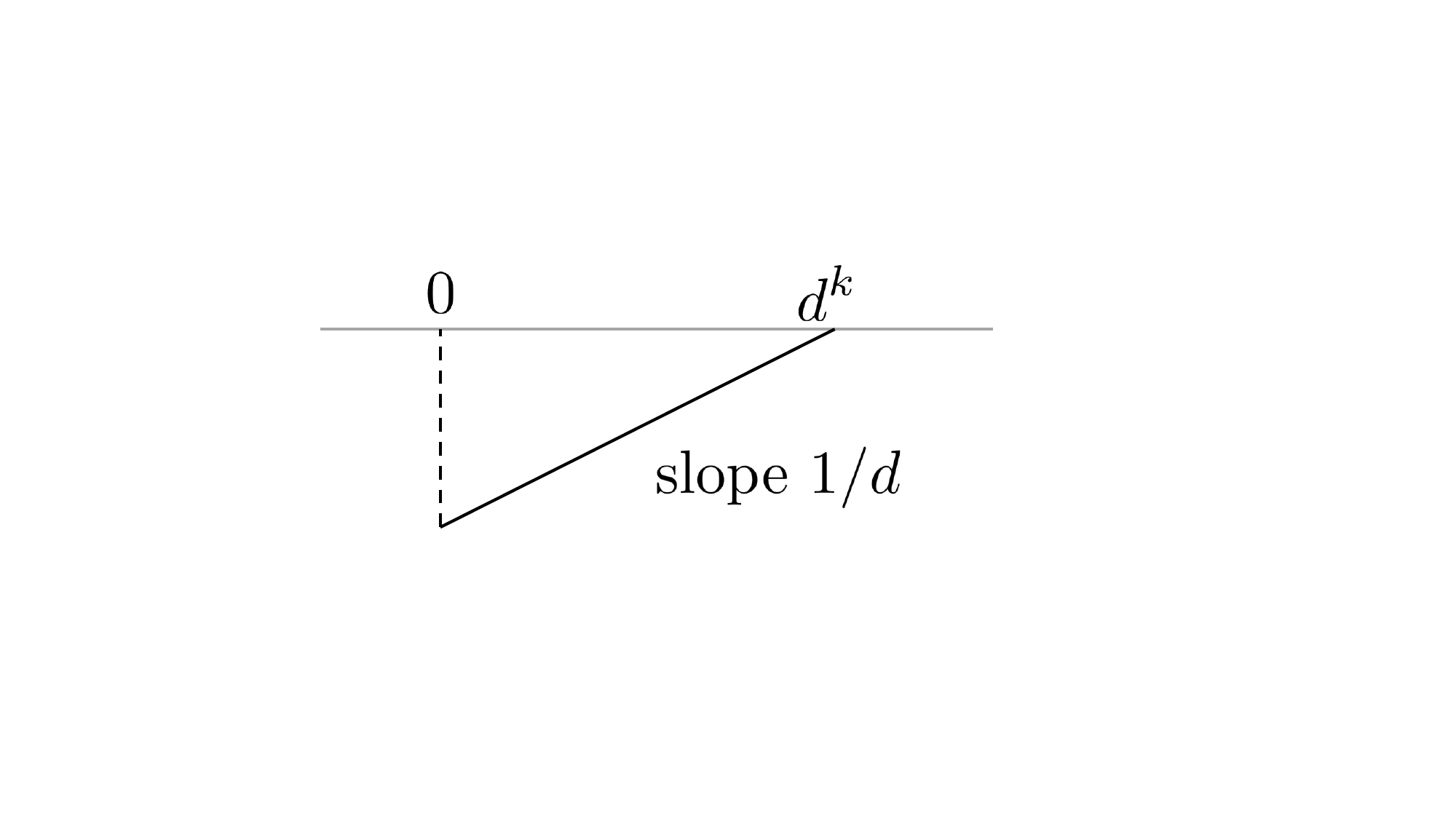}
		\caption{The Newton polygons of $f_c^{\circ k}(z) $ and $f_c^{\circ k}(z) - z$.}
		\label{fig: Newton_polygon_slope_only_1_over_d}
	\end{minipage}		
	\begin{minipage}[bthp]{0.45\linewidth}
		\centering
		\includegraphics[clip,width = 7.5cm]{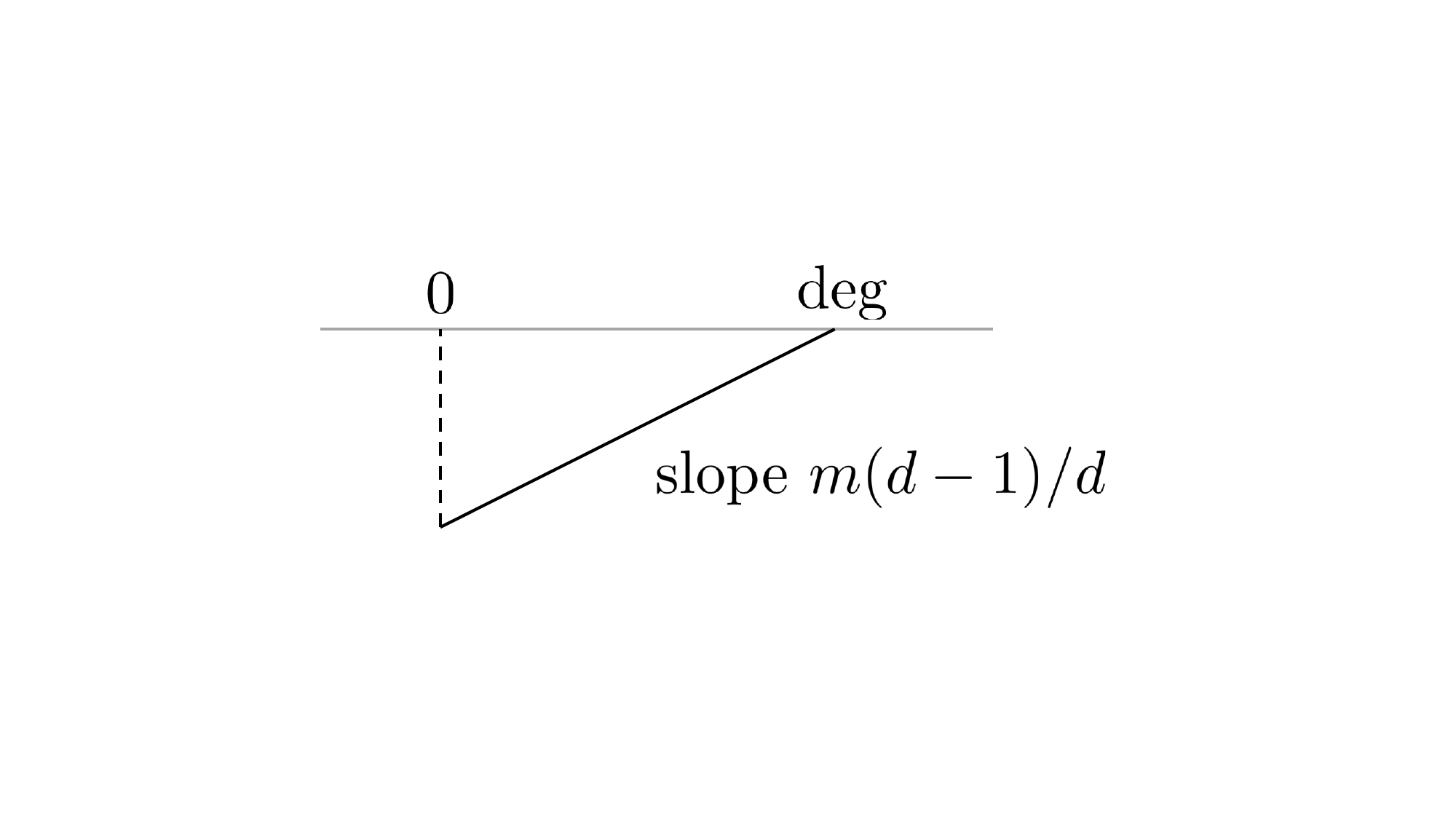}
		\caption{The Newton polygons of $\Res_z(f_c^{\circ k}(z)-z, x - (f_c^{\circ m})'(z)) $ and $\delta_m(x)$.}
		\label{fig: Newton_polygon_slope_only_m_d-1_over_d}
	\end{minipage}
\end{figure}

\begin{proof}
    Note that $f^{\circ k}(z)$ and $f^{\circ k}(z)-z$ are monic in $z$, and $\Res_z(f_c^{\circ k}(z)-z, x - (f_c^{\circ m})'(z))$ and $\delta_m(x)$ are monic in $x$. Thus, the endpoints of Newton polygons of these polynomials are on the axis. 
    
	\cref{item: lem: unicritical_NP_of_iterates} We prove that the Newton polygon of $f_c^{\circ k}$ consists of one line segment of slope $1/d$ by induction on $k$.
	This is trivial for $k=1$.
	Assume the statement for $k$.
	Then since $Z((f_c^{\circ k}(z))^d)$ is a $d$ copy of $Z(f_c^{\circ k})$, the Newton polygon of $(f_c^{\circ k}(z))^{d}$ consists of one line segment of slope $1/d$.
	The point $(0, v(c)) = (0,-1)$ is above the Newton polygon of $(f_c^{\circ k})^d$, and thus the Newton polygon of $f_c^{\circ(k+1)}$ is the same as that of $(f_c^{\circ k})^d$.
	
	\cref{item: lem: unicritical_NP_of_F}
	Since the point $(1, v(1)) = (1,0)$ is above the Newton polygon by \cref{item: lem: unicritical_NP_of_iterates}, the Newton polygon of $F_k = f_c^{\circ k} - z$ is the same as that of $f_c^{\circ{k}}$.
	
	\cref{item: lem: unicritical_NP_of_R}
	For all $\alpha \in Z(f_c^{\circ k}(z) - z)$, we have
	\begin{align}
		&\phantom{{}={}} v\left((f_c^{\circ{m}})'(\alpha)\right)
		= v\left(\prod_{i=0}^{m-1} d \cdot f_c^{\circ i}(\alpha)^{d-1}\right)\\
		&= \sum_{i=0}^{m-1} (d-1) \cdot v(f_c^{\circ i}(\alpha))
		= -m(d-1)/d, \label{eq: unicritical_valuation_of_multiplier}
	\end{align}
        where note that for all $i\geq 0$, the equality $v(f_c^{\circ i}(\alpha)) = -1/d$ holds by \cref{item: lem: unicritical_NP_of_F} since $f_c^{\circ i}(\alpha)$ for $0\leq i \leq m-1$ are elements in $Z(f_c^{\circ k}(z) - z)$ as well.
	The equation \eqref{eq: unicritical_valuation_of_multiplier} implies the statement.
	
	\cref{item: lem: unicritical_NP_of_delta}
	Since we have
	\[
	\delta_m(x)^m = \Res_z\left(\Phi_m^*(z), x - (f_c^{\circ m})'(z)\right) = \prod_{k|m} \Res_z\left(f_c^{\circ k}(z) - z, x-(f_c^{\circ{m}})'(z)\right)^{\mu(m/k)},
	\]
	the valuation of all the roots of $\delta_m(x)$ are equal to $-m(d-1)/d$ by \cref{item: lem: unicritical_NP_of_R}.
	
\end{proof}


\subsection{Monicness}
\label{subsec: unicritical_monicness}

In this subsection, we show the statement in \cref{thm: unicritical_main} that $\delta_m(x)$ is monic in $d^d c^{d-1}$ up to multiplication by $\pm 1$.
\begin{prop}\label{prop: unicritical_monicness}
	For all positive integers $k, m$, the following statements hold.
	\begin{enumerate}
		\item \label{item: prop: unicritical_monicness_Res}
		The leading term of $\Res_z\left(f_c^{\circ k}(z)-z, x-(f_c^{\circ m})'(z)\right)\in \Z[x,d^dc^{d-1}]$ in $c$ is $(-1)^{d^k(m(d-1)+1}(d^dc^{d-1})^{md^{k-1}}$.
		In particular, it is monic in $d^dc^{d-1}$ up to multiplication by $(-1)^{d^k(m(d-1)+1}$.
		\item \label{item: prop: unicritical_monicness_delta}
		The multiplier polynomial $\delta_m(x) \in\Z[x, d^dc^{d-1}]$ is monic in $d^dc^{d-1}$ up to multiplication by $(-1)^{\frac{d_m}{m}}$.
	\end{enumerate}
\end{prop}

\begin{proof}
	\cref{item: prop: unicritical_monicness_Res} By \cref{lem: unicritical_valuation_of_roots}\cref{item: lem: unicritical_NP_of_R}, the highest degree of $\Res_z(f_c^{\circ k}(z)-z, x-(f^{\circ m})'(z))$ in $c$ is realized only in the constant term for $x$.
	The constant term for $x$ is calculated as follows: 
	\begin{align}
		\Res_z(f_c^{\circ k}(z)-z, -(f_c^{\circ m})'(z))
		&= \prod_{\alpha\in Z(f_c^{\circ k}(z)-z)} (-1)\prod_{i=0}^{m-1}d (f_c^{\circ i}(\alpha))^{d-1} \\
		&= (-1)^{d^k}\cdot d^{md^k}\left( \prod_{\alpha \in Z(f_c^{\circ k}(z)-z)} \alpha \right)^{m(d-1)}\\
            &= (-1)^{d^k}\cdot d^{md^k}\left( (-1)^{d^k} f_c^{\circ k}(0) \right)^{m(d-1)}\\
		&= (-1)^{d^k(m(d-1)+1)}d^{md^k} f_c^{\circ k}(0)^{(d-1)m}\\
		&= (-1)^{d^k(m(d-1)+1)}(d^d c^{d-1})^{md^{k-1}} + (\text{terms of lower degrees in } c).
	\end{align}
	Thus, the statement holds.
	
	\cref{item: prop: unicritical_monicness_delta}
        Let $\alpha_1,\alpha_2, \ldots, \alpha_r$ represent the $f_c$-orbits of $Z(\Phi_m^\ast)$, where
        $r = d_m/m$ and $d_m = \deg_z\Phi_m^\ast(z)$.
        By \cref{lem: unicritical_valuation_of_roots} \cref{item: lem: unicritical_NP_of_delta}, the highest degree of $\delta_m(x)$ in $c$ is realized only in the constant term for $x$.
        Hence, the following calculation of the constant term in $x$ implies the assertion.
        \begin{align}
            \delta_m(0)
            &= \prod_{i=1}^{r} (-(f_c^{\circ m})'(\alpha_i))
            = (-1)^r \prod_{i=1}^r \prod_{j=0}^{m-1} \left(d (f_c^{\circ j}(\alpha_i))^{d-1}\right)\\
            &= (-1)^r d^{rm} (-1)^{d_m(d-1)} \Phi_m^\ast(0)^{d-1}
            = (-1)^{r+d_m(d-1)} d^{d_m} \Phi_m^\ast(0)^{d-1}\\
            &= (-1)^{r+d_m(d-1)} d^{d_m} \left(\prod_{k|m} \left.\left(f_c^{\circ k}(z) - z\right)^{\mu(m/k)}\middle|_{z=0}\right.\right)^{d-1}\\
            &= (-1)^{r+d_m(d-1)} d^{d_m} \left(\prod_{k|m} \left(f_c^{\circ (k-1)}(c)\right)^{\mu(m/k)}\right)^{d-1}\\
            &= (-1)^{r+d_m(d-1)}d^{d_m} \left(\prod_{k|m} \left(c^{d^{k-1}}+ (\text{terms of lower degrees in }c)\right)^{\mu(m/k)}\right)^{d-1}\\
            &= (-1)^{r+d_m(d-1)}(d^{d} c^{d-1})^{\frac{d_m}{d}} + (\text{terms of lower degrees in }c)\\
            &= (-1)^{\frac{d_m}{m}+d_m(d-1)}(d^{d} c^{d-1})^{\frac{d_m}{d}} + (\text{terms of lower degrees in }c).
        \end{align}
        Since $d \mid d_m$, $d_m (d-1)$ is even in any cases.
\end{proof}
\begin{rem}
    We need to calculate the leading term of $\delta_m$, not of $\delta_m^m$, to determine the leading term without the multiplication by units.
    Thus, \cref{item: prop: unicritical_monicness_Res} of this proposition is not used in the latter discussion but left in to make our argument parallel to the case of $z^{d+1}+cz$.
\end{rem}


\subsection{Integrality of parameters with periodic points of fixed multiplier}
\label{subsec: unicritical_integrality_with_fixed_multipliers}


In this subsection, we prove the following theorem.
\begin{thm}\label{thm: unicritical_integrality_of_parameter}
    Let $d \geq 2$ be an integer and $f_c(z) = z^d + c$.
    Let $\mathcal{O}$ be an integral domain.
    \begin{enumerate}
    \item \label{item: thm: unicritical_integrality_of_resultant}
    For a monic polynomial $\Phi(x)$ over an integral domain $\mathcal{O}$, the quantity $\Res_x(\Phi(x),\delta_{f_c,m}(x))$ is in $\mathcal{O}[d^d c^{d-1}]$, and if $d\neq 0$ in $\mathcal{O}$, it is monic in $d^d c^{d-1}$ up to multiplication by $\pm 1$.
    \item \label{item: thm: unicritical_integrality_of_parameter}
    For a parameter $\gamma \in \mathcal{O}$,
    assume that $f_{\gamma}(z)$ has an $m$-periodic point $\alpha \in \overline{\Frac \mathcal{O}}$ such that its multiplier $(f_{\gamma}^{\circ m})'(\alpha)$ is integral over $\Z$.
    Then $d^d \gamma^{d-1}$ is integral over $\Z$ as well.
    \end{enumerate}
\end{thm}

\begin{proof}
    \cref{item: thm: unicritical_integrality_of_parameter} follows from \cref{item: thm: unicritical_integrality_of_resultant}.
    We prove the assertion of \cref{item: thm: unicritical_integrality_of_resultant}.
    Let $\Phi(x) = \sum_{i=0}^{N} a_i x^i \in \mathcal{O}[x]$ be a monic polynomial.
    Write $\delta_m(x) = \sum_{i=0}^{d_m/m} b_i x^i$, where $d_m = \sum_{k|m} d^k\mu(m/k)$ and $b_i \in \Z[d^d c^{d-1}]$.
    Then we have the equalities
    \begin{align}
        0 &= \Res_x(\Phi(x), \delta_m(x))\\
        &= \begin{vmatrix}
        1       & a_{N-1}   & \cdots    & \cdots    & a_0       &           &           &\\
                & 1         & a_{N-1}   & \cdots    & \cdots    & a_0       &           &\\
                &           & \ddots    &           &           &           & \ddots    &\\
                &           &           & 1         & a_{N-1}   & \cdots    & \cdots    & a_0\\
        b_{d_m/m} & b_{d_m/m-1} & \cdots    & \cdots    & b_0       &           &           &\\
                & b_{d_m/m}   & b_{d_m/m-1} & \cdots    & \cdots    & b_0       &           &\\
                &           & \ddots    &           &           &           & \ddots    &\\
                &           &           & b_{d_m/m}   & b_{d_m/m-1} & \cdots    & \cdots    & b_0
        \end{vmatrix}
    \end{align}
    By \cref{prop: unicritical_monicness} \cref{item: prop: unicritical_monicness_delta}, the minimum valuation of coefficients of $\delta_m(x)$ for $x$ is realized only in the constant term $b_0$, and $\delta_m(x)\in \Z[d^dc^{d-1},x]$ is monic in $d^dc^{d-1}$ up to multiplication by $\pm 1$.
    Hence $\Res_x(\Phi(x), \delta_m(x))$ is also a monic polynomial in $d^d c^{d-1}$ up to multiplication by $\pm 1$.
\end{proof}

\begin{cor}[{\cite[Th\'{e}or\`{e}me]{Bousch14} for quadratic case, \cite[Theorem 1.1]{Milnor14} for general case}]\label{cor: unicritical_integrality_of_parabolic_parameters}
    For an integer $d\geq 2$ and a parameter $\gamma \in \C$, assume that $f_\gamma (z)= z^d + \gamma$ has a parabolic $m$-periodic point $\alpha \in \C$ for some $m\geq 1$.
    Then $d^d \gamma^{d-1}$ is an algebraic integer.
    In particular, when we further assume that $\gamma$ is rational, then $d^d \gamma^{d-1}$ is an integer.
\end{cor}
\begin{proof}
    Since the multiplier of the parabolic periodic point is a root of unity by definition, it is an algebraic integer by applying \cref{thm: unicritical_integrality_of_parameter} in the case of $\mathcal{O}=\C$.
    The last statement follows from the normality of the ring $\Z$.
\end{proof}

\begin{rem}
    \cref{cor: unicritical_integrality_of_parabolic_parameters} is proved by Bousch \cite[Th\'{e}or\`{e}me]{Bousch14} in the quadratic case and Milnor \cite[Theorem 1.1]{Milnor14} in general case by using a property of integral closures.
    We give an explicit polynomial relation which is satisfied by $d^d \gamma^{d-1}$.
\end{rem}


\section{Multiplier polynomials for $ z^{d+1}+ cz $} \label{sec: non_unicritical}


In this section, we fix a positive integer $ d $ and consider a polynomial map $ f(z) \coloneqq z^{d+1}+ cz $ with an indeterminate $ c $.
We aim to prove \cref{thm: non_unicritical_main}.


\subsection{Replacement of polynomials} \label{subsec: non_unicritical_replace}


Let $ \tilde{f}(z) \coloneqq z^{d+1} - cz^d + c $.
For a positive integer $ m $, define 
\[
F_m(z) \coloneqq \prod_{i=0}^{m-1} \tilde{f}^{\circ i}(z) - 1, \quad
\calF_m (y, z) \coloneqq \prod_{i=0}^{m-1} \left( y +  \tilde{f}^{\circ i}(z) \right),
\]
\[
R_{k,m}(x) \coloneq \Res_z \left( F_k(z), x - (d+1)^m \calF_m \left( -\frac{d}{d+1} c, z \right) \right).
\]
Here we recall that we set $ \tilde{f}^{\circ 0} (z) \coloneqq z $ as in \cref{sec: intro}.
As the following lemma shows, these polynomials are helpful to study multiplier polynomials.

\begin{lem} \label{lem: non_unicritical_tilde_f}
    For a positive integer $ m $, the following statements hold.
    \begin{enumerate}
		\item \label{item: lem: non_unicritical_tilde_f: 1}
		Let $ \bbP^1 = \bbP^1_{\Q(c)} $ be the projective line over $ \Q(c) $ and $ \tau \colon \bbP^1 \longrightarrow \bbP^1 $ be the endomorphism defined by $ \tau(z) \coloneqq z^d + c $.
		Then, the diagram
		\begin{equation}
		  \begin{tikzcd}
				\bbP^1 \arrow[r, "\tau"] \arrow[d, "f"] & \bbP^1 \arrow[d, "\tilde{f}"] \\
				\bbP^1 \arrow[r, "\tau" '] & \bbP^1
			\end{tikzcd}
		\end{equation}
		commutes.
		
		\item \label{item: lem: non_unicritical_tilde_f: 2}
		We have 
		\begin{align}
			\tilde{f}^{\circ m} (z) &= 
			(z-c) \left( \prod_{i=0}^{m-1} \tilde{f}^{\circ i}(z) \right)^d + c, \\
			f^{\circ m} (z) &= 
			z \prod_{i=0}^{m-1} \tilde{f}^{\circ i} (\tau(z)), \\
			\left( f^{\circ m} \right)' (z) &= 
			(d+1)^m \calF_m \left( -\frac{d}{d+1} c, \tau(z) \right).
		\end{align}
		
		\item \label{item: lem: non_unicritical_tilde_f: 3}
		For a positive integer $ k $, we have
		\[
		\Res_z \left( f^{\circ k}(z) - z, x - \left( f^{\circ m} \right)' (z) \right)
		=
		(x - c^m) R_{k,m}(x)^{d}.
		\]
		In particular, we have 
		\[
		\delta_m(x)^m
		=
		(x - c^{m})^{\varepsilon_m}
		\left(
		\prod_{k \mid m} R_{k,m}(x)^{\mu(m/k)}
		\right)^d,		
		\]
		where
		\[
		\varepsilon_m \coloneqq
		\begin{cases}
			1 & \text{ if } m=1, \\
			0 & \text{ if } m \neq 1.
		\end{cases}
		\]
  
		\item \label{item: lem: non_unicritical_tilde_f: 4}
		The map $ \tilde{f} $ induces a permutation of $ Z(F_m) $, which is the set of roots of $ F_m $.
		
		\item \label{item: lem: non_unicritical_tilde_f: 5}
		The degree of $ F_m(z) $  with respect to $ c $ is $ \frac{(d+1)^{m-1} - 1}{d} $.
		In particular, the polynomial $ d^{\frac{(d+1)^{m-1} - 1}{d}} F_m(z) $ is an element of $ \Z[dc, x] $.
    \end{enumerate}
\end{lem}

\begin{proof}
	\cref{item: lem: non_unicritical_tilde_f: 1} follows from a direct calculation.
	
	\cref{item: lem: non_unicritical_tilde_f: 2}.
	The first equation holds for $ m = 1 $ since $ \tilde{f}(z) = (z-c)z^d + c $.
	If it holds for $ m $, then we have
	\[
	\tilde{f}^{\circ (m+1)} (z) 
	=
	\left( \tilde{f}^{\circ m} (z) - c \right) \tilde{f}^{\circ m} (z)^d + c
	= 
	(z-c) \left( \prod_{i=0}^{m-1} \tilde{f}^{\circ i}(z) \right)^d \tilde{f}^{\circ m} (z)^d + c.
	\]
	Thus, we obtain the first equation.
	
	Next, we prove the second equation.
	By \cref{item: lem: non_unicritical_tilde_f: 1}, we have $ \tilde{f}^{\circ m} \left( \tau(z) \right) = \tau \left( f^{\circ m} (z) \right) = f^{\circ m}(z)^d + c $.
	Thus, by combining the first equation, we obtain
	\begin{align}
		f^{\circ m}(z)^d
		&=
		\tilde{f}^{\circ m} \left( \tau(z) \right) - c
		=
		\left( \tau(z) - c \right) \left( \prod_{i=0}^{m-1} \tilde{f}^{\circ i} \left( \tau(z) \right) \right)^d
		=
		\left( z \prod_{i=0}^{m-1} \tilde{f}^{\circ i} \left( \tau(z) \right) \right)^d.
	\end{align}
	Thus, we obtain the second equation.
	
	Finally, we prove the last equation.
	The chain rule implies $ \left( f^{\circ m} \right)' (z) = \prod_{i=0}^{m-1} f' \left( f^{\circ i}(z) \right) $.
	Since $ f'(z) = (d+1) z^d + c $ and $ f^{\circ m} (z)^d = \tilde{f}^{\circ m} \left( \tau(z) \right) - c $, we have
	\[
	f^{\circ m}(z)' 
	= 
	\prod_{i=0}^{m-1} \left( (d+1) f^{\circ i} (z)^d + c \right)
	= 
	\prod_{i=0}^{m-1} \left( (d+1) \tilde{f}^{\circ i} (\tau(z)) - dc \right)
	= 
	(d+1)^m \calF_m \left( -\frac{d}{d+1}c, \tau(z) \right).
	\]
	
	\cref{item: lem: non_unicritical_tilde_f: 3}.
	By \cref{item: lem: non_unicritical_tilde_f: 2}, we have
	\begin{align}
		&\Res_z \left( f^{\circ k}(z) - z, x - f^{\circ m}(z)' \right)
		\\
		= \,
		&\Res_z \left( z F_k \left( \tau(z) \right), x - (d+1)^m \calF_m \left( -\frac{d}{d+1}c, \tau(z) \right) \right)
		\\
		= \,
		&\left( x - (d+1)^m \calF_m \left( -\frac{d}{d+1} c, \tau(0) \right) \right)
		\Res_z \left( F_k \left( z^d + c \right), x - (d+1)^m \calF_m \left( -\frac{d}{d+1} c, z^d + c \right) \right).
	\end{align}
	Here we have $ (d+1)^m \calF_m \left( -\frac{d}{d+1} c, \tau(0) \right) = c^m $ since 
	$ \tilde{f}^{\circ i} \left( \tau(0) \right) = \tau\left( f^{\circ i}(0) \right) = \tau(0) = c $ for each $ 0 \le i < m $ by \cref{item: lem: non_unicritical_tilde_f: 2}.
	In addition, for any polynomials $ F(z) $ and $ G(z) $, we have 
	$ \Res \left( F\left( z^d + c \right), G\left( z^d + c \right) \right) = \Res(F, G)^d $ by \cref{lem: Res} \cref{item: lem: Res: base_change}.
	Thus, we obtain the first equation.
	The last equation follows from the first equation and the M\"{o}bius inversion formula.
	
	\cref{item: lem: non_unicritical_tilde_f: 4}.	
	By \cref{item: lem: non_unicritical_tilde_f: 1}, we have 
	$ \tilde{f}^{\circ m} (z) - z =	(z-c) \left( \left( \prod_{i=0}^{m-1} \tilde{f}^{\circ i}(z) \right)^d - 1 \right) $.
	Since $ F_m(z) $ divides this, for each $ \alpha \in Z(F_m) $ we have $ \tilde{f}^{\circ m} (\alpha) = \alpha $.
	Thus, we obtain
	\[
	F_m \left( \tilde{f}(\alpha) \right)
	= \prod_{i=1}^{m} \tilde{f}^{\circ i} (\alpha) - 1 
	= \prod_{i=0}^{m-1} \tilde{f}^{\circ i} (\alpha) - 1 
	= F_m (\alpha) 
	= 0.
	\]
	This implies $ \tilde{f}(\alpha) \in Z(F_m) $.
	Since $ \tilde{f}^{\circ m} (\alpha) = \alpha $, $ \tilde{f} $ induces a permutation of $ Z(F_m) $.
	
	\cref{item: lem: non_unicritical_tilde_f: 5} follows from\cref{item: lem: non_unicritical_tilde_f: 2} and induction on $m$.
\end{proof}

By \cref{lem: non_unicritical_tilde_f} \cref{item: lem: non_unicritical_tilde_f: 3}, \cref{thm: non_unicritical_main} follows from the following proposition.

\begin{prop} \label{prop: non_unicritical_resultant}
	For positive integers $ k $ and $ m $, the polynomial $d^{m \frac{(d+1)^{k-1} - 1}{d}} R_{k,m}(x)$ is an element of $ \Z[dc, x] $ and is monic in $ dc $ up to multiplication by $ \pm 1 $.
\end{prop}
In the following subsections, we prove this proposition.


\subsection{Integrality} \label{subsec: non_unicritical_integrality}


The first statement in \cref{prop: non_unicritical_resultant} follows from the following proposition.

\begin{prop} \label{prop: non_unicritical_degree}
	For positive integers $ k $ and $ m $, the degree of $ \Res_z \left( F_k(z), x - \calF_m \left( y, z \right) \right) $ with respect to $ c $ is 
	$ m \frac{(d+1)^{k-1} - 1}{d} $.
	Moreover, we have $d^{m \frac{(d+1)^{k-1} - 1}{d}} R_{k,m}(x) \in \Z[dc, x]$.
\end{prop}

To prove this proposition, we study $ c^{-1} $-adic valuations of roots of $ F_k(z) $.


We denote by $ v $ the $ c^{-1} $-adic valuation on $ \overline{\Q(( c^{-1} ))} $. 
Following the method in \cref{subsec: complete_valuation_fields}, we extend $ v $ to $ \overline{\Q(( c^{-1} ))} (x,y) $, and the extension is also denoted by $ v $.
Recall that $ v $ has the following properties.
\begin{enumerate}
    \item $ v = -\deg_c $ on $ \Q[c,x,y] $,
    \item \label{item: valuation_x} $ v(x + f) = \min\{0, v(f)\} $ for all $f \in \overline{\Q((c^{-1}))}(y)$, and 
    \item \label{item: negative_valuation}
    $ v(y + \alpha) = \min\{0, v_{c^{-1}}(\alpha) \} $ for all $ \alpha \in \overline{\Q((c^{-1}))} $.
\end{enumerate}

\begin{lem} \label{lem: non_unicritical_valuation_calF}
	For any positive integers $ k $ and $ m $ and any root $ \alpha \in Z(F_k) $, we have $ v \left( \calF_m (y, \alpha) \right) \le 0 $ and 
	$ v \left( x - \calF_m (y, \alpha) \right) = v \left( \calF_m (y, \alpha) \right) $.
\end{lem}

\begin{proof}
    By the property \cref{item: negative_valuation} above \cref{lem: non_unicritical_valuation_calF}, we have
    \[
    v \left( \calF_m (y, \alpha) \right) = v(y+ \alpha) + v \left( y+ \tilde{f}(\alpha) \right) + \cdots + v \left( y+ \tilde{f}^{\circ (m-1)} (\alpha) \right)\le 0,
    \]
    which is the first statement.
    The last inequality follows from the first inequality and \cref{item: valuation_x}.
\end{proof}

Finally, we prove \cref{prop: non_unicritical_degree}.

\begin{proof}[Proof of $  \cref{prop: non_unicritical_degree} $]
	By \cref{lem: non_unicritical_valuation_calF}, we have
	\begin{align}
		&\phantom{{}={}} -\deg_c \Res_z \left( F_k(z), x - \calF_m \left( y, z \right) \right)\\
		&= v \left( \Res_z \left( F_k(z), x - \calF_m \left( y, z \right) \right) \right) \\
		&= \sum_{\alpha \in Z(F_k)} v \left( x - \calF_m (y, \alpha) \right) \\
		&= \sum_{\alpha \in Z(F_k)} v \left( \calF_m (y, \alpha) \right) \\
		&= v \left( \prod_{\alpha \in Z(F_k)} \prod_{i=0}^{m-1} \left( y+ \tilde{f}^{\circ i} (\alpha) \right) \right).
	\end{align}
	By \cref{lem: non_unicritical_tilde_f} \cref{item: lem: non_unicritical_tilde_f: 4,item: lem: non_unicritical_tilde_f: 5}, this is equals to 
	\[
	v \left( F_k(y)^m \right)
	= -m \deg_c F_k
	= -m \frac{(d+1)^{k-1} - 1}{d}.
	\]
    
    We prove the last statement.
    Since
    \begin{align}
	R_{k,m} (x)
	&= 
	\Res_z \left( F_k(z), (d+1)^m \left( (d+1)^{-m} x - \calF_m \left( -\frac{d}{d+1} c, z \right) \right) \right) \\
	&=
	(d+1)^{m \deg_z F_k} \Res_z \left( F_k(z), (d+1)^{-m} x - \calF_m \left( -\frac{d}{d+1} c, z \right) \right),
    \end{align}
    by the fist statement, we have
    \[
    d^{m \frac{(d+1)^{k-1} - 1}{d}} R_{k,m}(x) 
    \in \Z[(d+1)^{-1}, dc, x] \cap \Z[c, x]
    = \Z[dc, x].
    \]
\end{proof}


\subsection{Valuations of roots} \label{subsec: non_unicritical_valuation}


To prove the last statement in \cref{prop: non_unicritical_resultant}, we study valuations of roots of $ F_k $ in this subsection.
Our primary tool is Newton polygons.

\begin{lem} \label{lem: non_unicritical_valuation_F_k}
	For a positive integer $ k $ and any roots $ \alpha \in Z \left( \tilde{f}^{\circ k} \right) \cup Z \left( F_k \right) $, we have 
	$ v(\alpha) \ge -1 $.
\end{lem}

\begin{proof}
	It suffices to show that Newton polygons of $ \tilde{f}^{\circ k}(z) $ and $ F_k(z) $ have only slope at most $1$.
	
	To begin with, we prove the statement for $ \tilde{f}^{\circ k}(z) $ by induction.
	For $ m=1 $, the Newton polygon of $ \tilde{f}(z) = z^{d+1} - c z^d + c $ is shown in \cref{fig: Newton_polygon_f} and it has only slope at most $1$.
	Suppose the statements for $ \tilde{f}^{\circ i}(z) $ hold for $ 1 \le i \le m $.
	Then, $ (z-c) \left( \prod_{i=0}^{k-1} \tilde{f}^{\circ i} (z) \right)^d $ has only roots with valuations at most $-1$ and thus its Newton polygon takes the form shown in \cref{fig: Newton_polygon_slope1}.
	Thus, Newton polygon of $ \tilde{f}^{\circ k} (z) = (z-c) \left( \prod_{i=0}^{mk-1} \tilde{f}^{\circ i} (z) \right)^d + c $ also takes the form as shown in \cref{fig: Newton_polygon_slope1}.
	
	Finally, we prove the statement for $ F_k(z) $.
	By the statement for $ \tilde{f}^{\circ k}(z) $, the polynomial $ \prod_{i=0}^{k-1} \tilde{f}^{\circ i} (z) $ has only roots with valuations at most $-1$ and thus its Newton polygon takes the form as shown in \cref{fig: Newton_polygon_slope1}.
	Thus, the Newton polygon of $ F_k (z) = \prod_{i=0}^{k-1} \tilde{f}^{\circ i} (z) - 1 $ also takes the form as shown in \cref{fig: Newton_polygon_slope1}.	
\end{proof}

\begin{figure}[bthp]
	\begin{minipage}[bthp]{0.45\linewidth}
		\centering
		\includegraphics[width = 8.0cm]{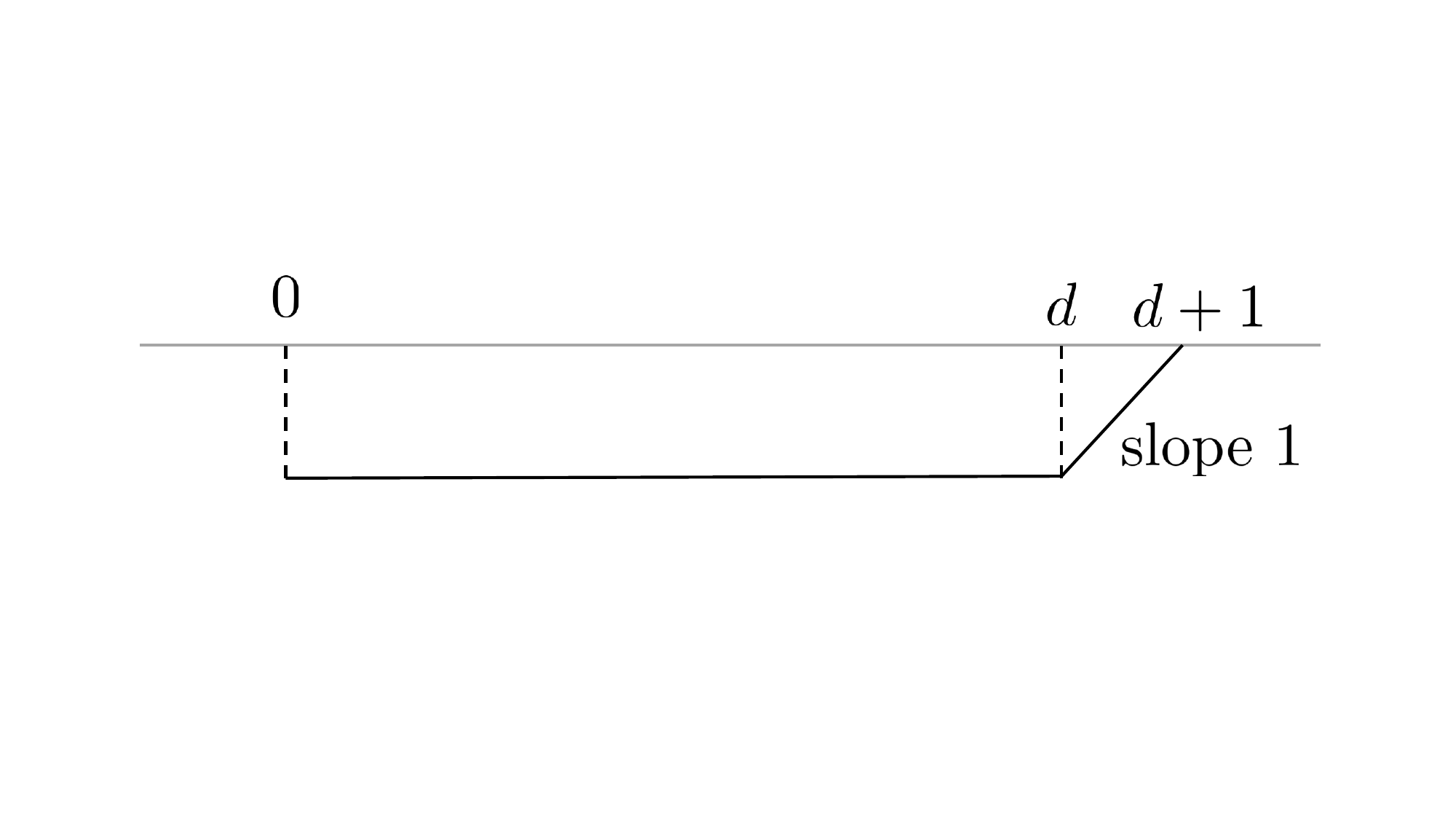}
		\caption{The Newton polygon of $ \tilde{f}(z) $.}
		\label{fig: Newton_polygon_f}
	\end{minipage}		
	\begin{minipage}[bthp]{0.45\linewidth}
		\centering
		\includegraphics[clip,width = 8.0cm]{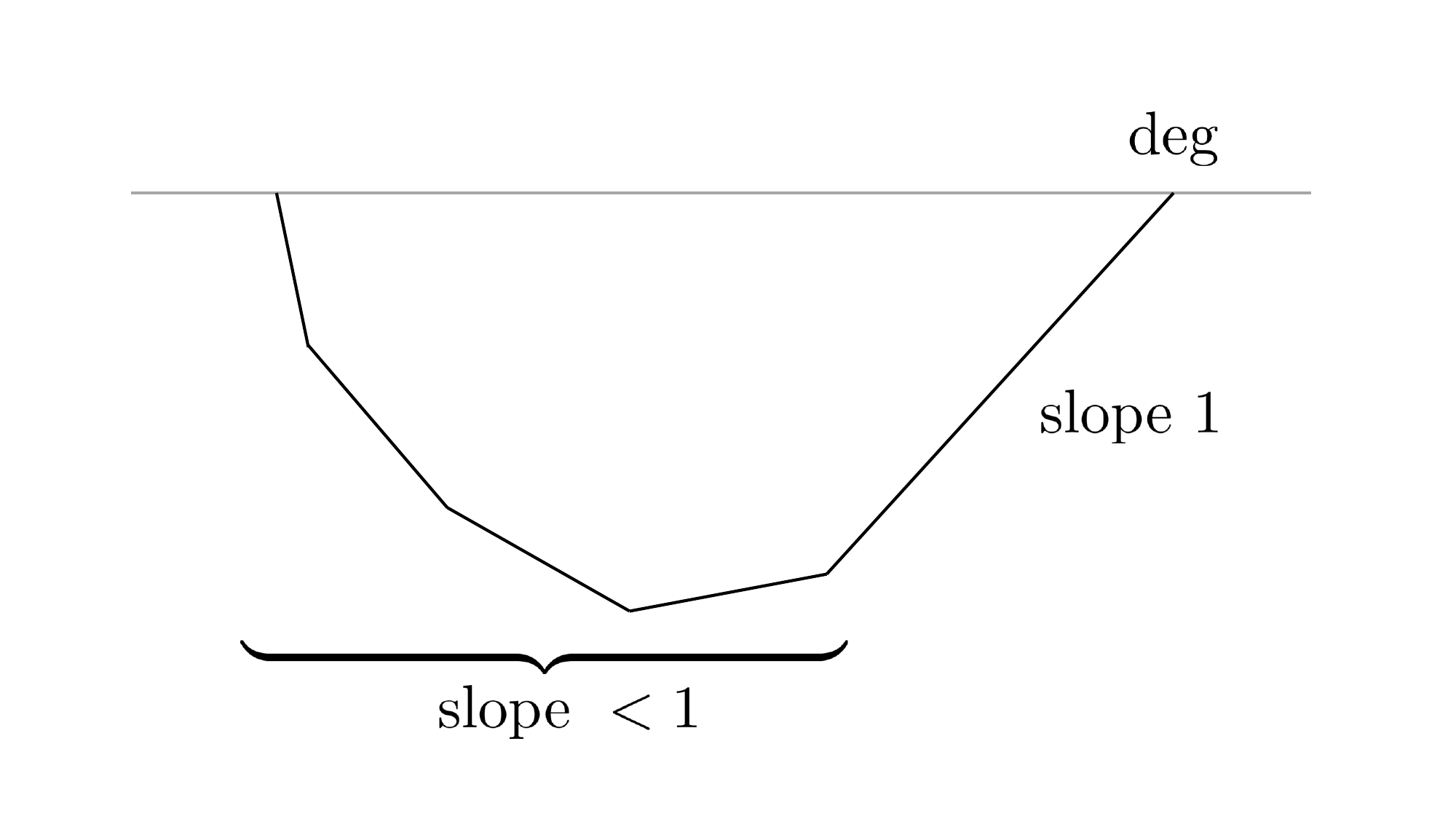}
		\caption{}
		\label{fig: Newton_polygon_slope1}
	\end{minipage}
\end{figure}

\begin{rem} \label{lem: non_unicritical_valuation_f^k}
	We can describe the Newton polygon of $ \tilde{f}^{\circ k} (z) $ explicitly.
	See \cref{fig: Newton_polygon_z_to_d+1_c_z_to_d_+c}.
\end{rem}

\begin{prop} \label{prop: non_unicritical_valuation_F_k}
	For positive integers $ k $ and $ m $ and any roots $ \alpha \in Z \left( F_k \right) $, we have 
	$ v((d+1)\alpha - dc) = -1 $ and $ v \left( (d+1)^m \calF_m \left( -\frac{d}{d+1} c, \alpha \right) \right) = -m $.
\end{prop}

\begin{proof}
    The second equality follows from the first equality because
    \[
        (d+1)^m \calF_m \left( -\frac{d}{d+1} c, \alpha \right)
	=
	\prod_{i=0}^{m-1}  \left( (d+1) \tilde{f}^{\circ i} (\alpha) - dc \right)
    \]
    and $ \tilde{f}(\alpha), \dots,  \tilde{f}^{\circ (m-1)} (\alpha) $ are roots of $ F_k(z) $ by \cref{lem: non_unicritical_tilde_f} \cref{item: lem: non_unicritical_tilde_f: 4}.
	
    We prove the first equality.
    Since $ v(\alpha) \ge -1 $ for any $ \alpha \in Z \left( F_k \right) $ by \cref{lem: non_unicritical_valuation_F_k}, we have 
    \[
	v((d+1)\alpha - dc)
        \ge \min \{ v(\alpha), -1 \}
        = -1.
    \]
    Thus, the Newton polygon of 
    \[
        G_k(x)
        \coloneqq \Res_z \left( F_k(z), x - (d+1)z + dc \right) 
        = \prod_{\alpha \in Z(F_k)} \left( x - (d+1)\alpha + dc \right)
    \]
    is of the form shown in \cref{fig: Newton_polygon_slope1}.
    On the other hand, its constant term equals to
    \[
        G_k(0)
        = \Res_z \left( F_k(z), -(d+1)z + dc \right)
        = (-d-1)^{\deg_z F_k} F_k \left( \frac{d}{d+1} c \right)
    \]
    and this is a polynomial of $c$ with degree $\deg_z F_k$.
    Thus, its valuation is $ v(G_k(0)) = -\deg_z F_k = -\deg_x G_k(x) $.
    Therefore, the Newton polygon of $ G_k(x) $ takes the form shown in \cref{fig: Newton_polygon_slope_only1}.
    It implies that all roots of $ G_k(x) $ have valuation $ v = -1 $.
\end{proof}

\begin{figure}[bthp]
    \centering
    \includegraphics[clip,width = 8.0cm]{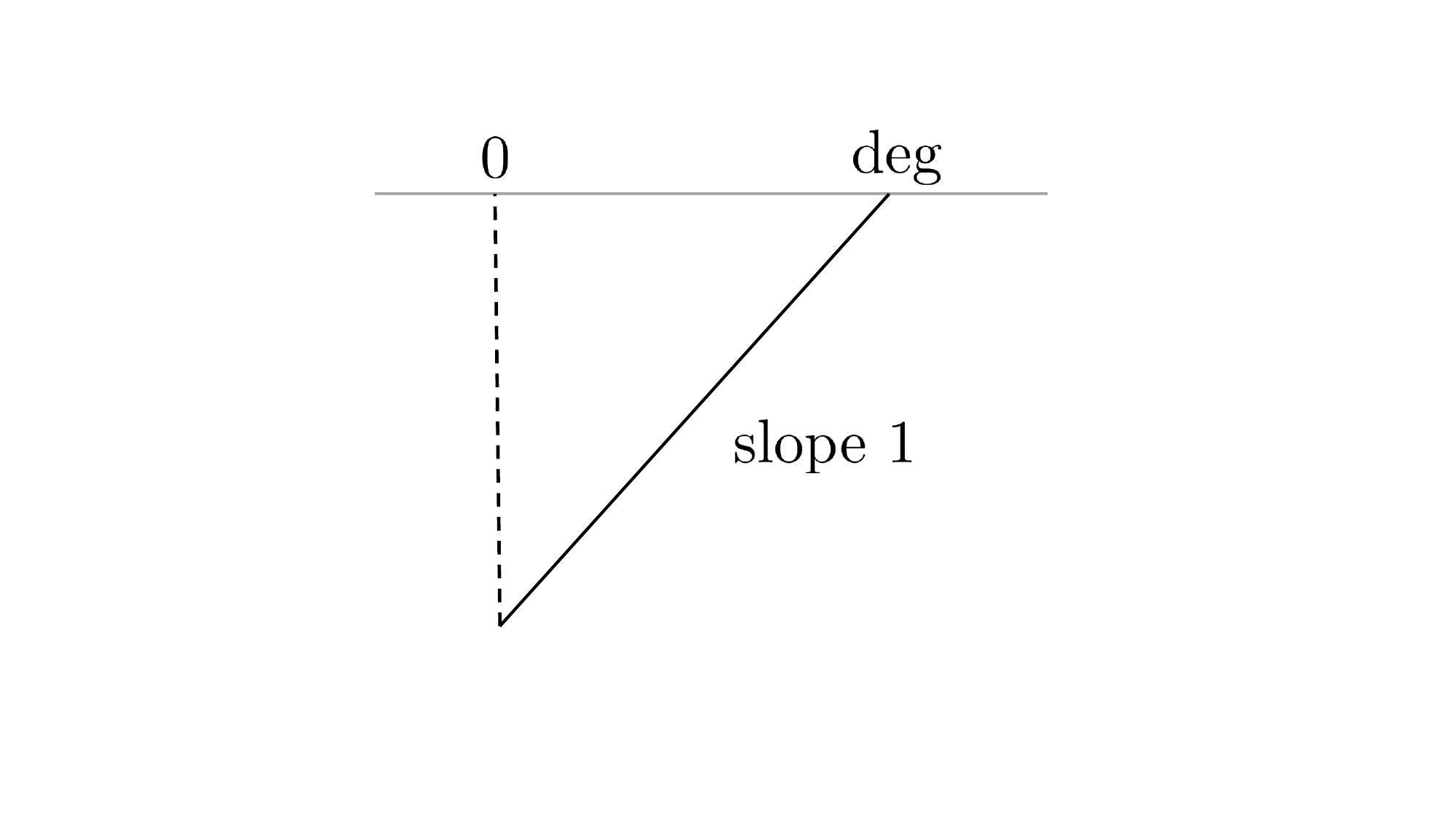}
    \caption{}
    \label{fig: Newton_polygon_slope_only1}	
\end{figure}


\subsection{Monicness} \label{subsec: non_unicritical_monic}


In conclusion of this section, we prove the following proposition. 
\cref{prop: non_unicritical_resultant} follows from \cref{prop: non_unicritical_degree,prop: non_unicritical_leading_term}.

\begin{prop} \label{prop: non_unicritical_leading_term}
    For positive integers $ k $ and $ m $, the leading term of $R_{k,m}(x)$ in $ c $ is
    \[
    (-1)^{\frac{(m+1)((d+1)^{k}-1) + m((d+1)^{k-1} - 1)}{d}} d^{m(d+1)^{k-1}} c^{m \frac{(d+1)^k - 1}{d}}
    \]
    Moreover, $d^{m \frac{(d+1)^{k-1} - 1}{d}} R_{k,m}(x)$ is monic in $ dc $ up to multiplication by $ \pm 1 $.
\end{prop}

Here, we remark that the last statement in this proposition follows from the first statement and \cref{prop: non_unicritical_degree}.
We need the following lemma to prove \cref{prop: non_unicritical_leading_term}.
In the following, we denote the leading term of a polynomial $ P(c) $ by $ \LT_c (P) $.

\begin{lem} \label{lem: non_unicritical_leading_term_f^k}
	For a positive integer $ k $, we have 
	\begin{align}
		\LT_c \left( (d+1)^{\deg_z \tilde{f}^{\circ k}} \tilde{f}^{\circ k} \left( \frac{d}{d+1} c \right) \right)
		&= (-d^d c^{d+1})^{(d+1)^{k-1}}, \\
		\LT_c \left( (d+1)^{\deg_z F_k} F_k \left( \frac{d}{d+1} c \right) \right)
		&= (-1)^{\frac{(d+1)^{k-1} - 1}{d}} d^{(d+1)^{k-1}} c^{\frac{(d+1)^k - 1}{d}}.
	\end{align}
\end{lem}

\begin{proof}
	Let $ \tilde{f}_k \coloneqq (d+1)^{\deg_z \tilde{f}^{\circ k}} \tilde{f}^{\circ k} \left( \frac{d}{d+1} c \right) $ and $ l_k \coloneqq \LT_c \left( \tilde{f}_k \right) $.
	Then, we have
	\[
	l_1
	= \LT_c \left( (d+1)^{d+1} \left( \left( \frac{d}{d+1} c \right)^{d} \left( \frac{d}{d+1} c - c \right) + c \right) \right)
	= -d^d c^{d+1}
	\]
	and
    \begin{align}
        \tilde{f}_{k+1}
	    &= (d+1)^{(d+1) \deg_z \tilde{f}^{\circ k}} \left( \tilde{f}^{\circ k} \left( \frac{d}{d+1} c \right)^{d+1} - c \tilde{f}^{\circ k} \left( \frac{d}{d+1} c \right)^{d} + c \right) \\
	    &= \tilde{f}_k^{d+1} - (d+1)^{\deg_z \tilde{f}^{\circ k}} c \tilde{f}_k^{d} + (d+1)^{(d+1) \deg_z \tilde{f}^{\circ k}} c.
    \end{align}
    These equalities imply $\deg_c \tilde{f}_k \ge 2 $, and thus, 
    \[
        l_{k+1}
        = \LT_c \left( \tilde{f}_{k+1} \right)
        = \LT_c \left( \tilde{f}_{k}^{d+1} \right)
        = l_k^{d+1}.
    \]
	Thus, we obtain $ l_k = (-d^d c^{d+1})^{(d+1)^{k-1}} $.
	Since $ F_k(z) \coloneqq z \tilde{f}(z) \cdots \tilde{f}^{\circ (k-1)} (z) - 1 $ , we have
	\[
	   \LT_c \left( (d+1)^{\deg_z F_k} F_k \left( \frac{d}{d+1} c \right) \right)
	   = \LT_c \left( dc \right) l_1 \cdots l_{k-1}
	   = (-1)^{\frac{(d+1)^{k-1} - 1}{d}} d^{(d+1)^{k-1}} c^{\frac{(d+1)^k - 1}{d}}.
	\]
\end{proof}

\begin{proof}[Proof of $ \cref{prop: non_unicritical_leading_term} $]
	The degree of $R_{k,m}(x)$  with respect to $ c $ is
	\[
	-\sum_{\alpha \in Z(F_k)} v \left( x - (d+1)^m \calF_m \left( -\frac{d}{d+1} c,  \alpha \right) \right).
	\]
        By \cref{prop: non_unicritical_valuation_F_k}, this is equal to
	\[
	-\sum_{\alpha \in Z(F_k)} (-m)
	=
	m \frac{(d+1)^k - 1}{d}.
	\]
 
	Since $ v(x) = 0 $ and $ v \left( (d+1)^m \calF_m \left( -\frac{d}{d+1} c, \alpha \right) \right) = -m $ for any $ \alpha \in Z(F_k) $ by \cref{prop: non_unicritical_valuation_F_k}, the minimum valuation of the terms of $\displaystyle \prod_{\alpha \in Z(F_k)} \left( x - (d+1)^m \calF_m \left( -\frac{d}{d+1} c, \alpha \right) \right)$ is realized only in the constant term for $x$.
    Hence we have
	\begin{equation}\label{eq: non-unicritical_valuation_with_indeterminates}
            \LT_c \left( \prod_{\alpha \in Z(F_k)} \left( x - (d+1)^m \calF_m \left( -\frac{d}{d+1} c, \alpha \right) \right) \right)
		  = \LT_c \left( \prod_{\alpha \in Z(F_k)} \left( - (d+1)^m \calF_m \left( -\frac{d}{d+1} c, \alpha \right) \right) \right)
	\end{equation}	
	Thus, we can calculate the leading term as
	\begin{align}
    		\LT_c \left( R_{k,m}(x) \right)
    		&=
    		\LT_c \left( \prod_{\alpha \in Z(F_k)} \left( x - (d+1)^m \calF_m \left( -\frac{d}{d+1} c, \alpha \right) \right) \right)\\
    		&= \LT_c \left( \prod_{\alpha \in Z(F_k)} \left( - (d+1)^m \calF_m \left( -\frac{d}{d+1} c, \alpha \right) \right) \right)
    		\text{\qquad \qquad by \eqref{eq: non-unicritical_valuation_with_indeterminates}}\\
    		&=
    		\LT_c \left( \prod_{\alpha \in Z(F_k)} (-1) \prod_{i=0}^{m-1} \left( (d+1) \tilde{f}^{\circ i} (\alpha) - dc \right) \right)\\
    		&= (-1)^{\deg_z F_k} \LT_c \left( \prod_{\alpha \in Z(F_k)} \left( (d+1) \alpha - dc \right) \right)^m \\
    		&=
    		(-1)^{\deg_z F_k} \LT_c \left( (-d-1)^{\deg_z F_k} F_k \left( \frac{d}{d+1} c \right) \right)^m
    		\text{\qquad by \cref{lem: non_unicritical_tilde_f} \cref{item: lem: non_unicritical_tilde_f: 4,item: lem: non_unicritical_tilde_f: 5}}\\
    		&= (-1)^{\deg_z F_k} \left( (-1)^{\deg_z F_k} \cdot (-1)^{\frac{(d+1)^{k-1} - 1}{d}} d^{(d+1)^{k-1}} c^{\frac{(d+1)^k - 1}{d}} \right)^m 
    		\text{\qquad by \cref{lem: non_unicritical_leading_term_f^k}}\\
    		&= (-1)^{\frac{(m+1)((d+1)^{k}-1) + m((d+1)^{k-1} - 1)}{d}} d^{m(d+1)^{k-1}} c^{m \frac{(d+1)^k - 1}{d}},
	\end{align}
        which is what we wanted.
\end{proof}


\section{Height bound on the parabolic parameters} \label{sec: height_bound}


This section establishes a uniform upper bound for the naive height of parabolic parameters in polynomial families $z^d + c$ and $z^{d+1} + cz$.
In the evaluation at finite places, we use the results on integrality discussed earlier, while in the evaluation at infinite places, we rely on the boundedness of Mandelbrot-type sets.

Throughout this section, the set of critical points of a polynomial $ f \in \C[z] $ is denoted by $ \mathrm{Crit}(f) $: 
\[
    \mathrm{Crit}(f) = \{ \alpha \in \C \colon f'(\alpha) = 0 \}.
\]


\subsection{Height bound for $z^d + c$}
\label{subsec: height_bound_unicritical}


In this subsection, we fix $f_c(z)= z^d +c$ with $d\geq 2$.
Recall that the Multibrot set is defined by $ \mathbb{M}_d \coloneq \{\gamma \in \bbC \colon (f_\gamma^{\circ n}(0))_{n=0}^{\infty} \text{ is bounded} \}$.
The following fact might be well-known to experts, but we describe it for the reader's convenience.

\begin{lem} \label{lem: mandelbrot_bound_unicritical}
    Let $\mathbb{M}_d$ be the Multibrot set.
    The set $\mathbb{M}_d$ is contained in the disk $\{c\in \bbC \colon |c| \le 2^{1/(d-1)} \}$.
    In particular, if $f_{c}= z^d + c$ has a parabolic periodic point, then the inequality $|c| \le 2^{1/(d-1)}$ holds.
\end{lem}

\begin{proof}
    The first statement is equivalent to that if $ |c| > 2^{1/(d-1)} $, $\lim_{n \to \infty} |f_c^{\circ n}(0)| = \infty $ holds.
  If $ |c| > 2^{1/(d-1)} $, then there exists a positive  real number $ \varepsilon > 0 $ which satisfies $ |c| \ge (2(1 + \varepsilon))^{1/(d-1)}$. 
  For a complex number $ z \in \bbC $ which satisfies $ |z| \ge |c| $, we have 
    \[ |f_c(z)| \ge |z| \left( |z|^{d-1} - 1 \right) \ge |z| \left( |c|^{d-1} - 1 \right) \ge (1 + \varepsilon)|z|. \]
  By induction, $|f_c^{\circ (n+1)}(0)| = |f_c^{\circ n}(c)| \ge (1 + \varepsilon)^n |c| \to \infty $.
  The second statement is a consequence of $\mathrm{Crit}(f_c) = \{ 0 \} $, the first statement, and \cref{thm: parabolic_attracting_critical}.
\end{proof}

\begin{proof}[Proof of \cref{thm: height_bound_unicritical}]
    Let $ \gamma \in \bbC $ as in the assertion.
  By \cref{thm: unicritical_main}, $ d^d \gamma^{d-1} $ is an algebraic integer, in particular, the parameter $\gamma$ is in $\overline{\Q}$.
  Let $ K = \Q(\gamma) $ be the field generated by $ \gamma $ and $ r = [ K: \Q ] $ be the extension degree.
  Since $ \|d^d \gamma^{d-1}\|_v \le 1 $ for all finite places $ v \in M_K^{0} $, we have
    \[ \prod_{v \in M_K^{0}} \max \{1, \|\gamma\|_v \} \le \prod_{v \in M_K^{0}} \|d^d\|_v ^{-1/(d-1)} = \prod_{v \in M_K^{\infty}} \|d^d\|_v ^{1/(d-1)} = (d^{d/(d-1)})^r .\]
  The second equation follows from the product formula (\cref{thm: product_formula}).
  On the other hand, since the inequality $ |\gamma| \le 2^{1/(d-1)} $ holds by \cref{lem: mandelbrot_bound_unicritical}, we obtain
    \[ \prod_{v \in M_K^{\infty}} \max \{1, \|\gamma\|_v \} \le \left( 2^{1/(d-1)} \right)^r.\]
  By combining these two estimations, 
    \[ H(\gamma)^r  \le (2 d^d)^{r/(d-1)}. \]
\end{proof}


\subsection{Height bound for $z^{d+1} + cz$}
\label{subsec: height_bound_non-unicritical}


In this subsection, we fix the following notation.
\begin{itemize}
  \item $f_c(z) = z^{d+1} + cz$,
  \item $r(d) = (2d+2)^{1/d} \left( 1 + \frac{1}{d} \right) (> 1)$.
\end{itemize}

\begin{lem} \label{lem: mandelbrot_bound_non_unicritical}
	For a positive integer $d$, the following statements hold.
	\begin{enumerate}
		\item \label{item: lem: mandelbrot_bound_non_unicritical_trivial}
        If two complex numbers $c, z \in \bbC$ satisfy $|z| > (2|c|)^{1/d}$ and $ |c| > 1 $, then $\displaystyle \lim_{n \to \infty} |f_c^{\circ n} (z)| = \infty$.
		\item \label{item: lem: mandelbrot_bound_non_unicritical_main}
        For $c \in \C$, assume that $f_c(z) = z^{d+1} + cz$ has a parabolic periodic point, then $|c| \le r(d)$.
	\end{enumerate}
\end{lem}

\begin{proof}
    \cref{item: lem: mandelbrot_bound_non_unicritical_trivial} If two complex numbers $c, z \in \bbC$ satisfy the relation $|z| > (2|c|)^{1/d}$ and $|c| > 1$, then there exists a positive real number $ \varepsilon > 0$ which satisfies $ |z| \ge (2 (1 + \varepsilon) |c|)^{1/d} $. Therefore, 
    \[ |f_c(z)| \ge |z| \left( |z|^d - |c| \right) \ge |z| \left( 2 (1 + \varepsilon) |c| - |c| \right) \ge (1 + \varepsilon) |z|. \]
    By induction, $ |f_c^{\circ n}(z)| \ge (1 + \varepsilon)^n |z| \rightarrow \infty$.
    
    \cref{item: lem: mandelbrot_bound_non_unicritical_main} If $ |c| > r(d) $, then $ d \left| \frac{c}{d+1} \right|^{1+\frac{1}{d}} > (2|c|)^{1/d} $ and $ |c| > 1$.
    On the other hand, the set of critical points of $ f_c $ is described as $\mathrm{Crit}(f_c) = \left\{ \zeta_d^i \left( \frac{-c}{d+1} \right)^{1/d} \colon 0 \le i \le d-1 \right\}$ and $\left|f_c\left( \zeta_d^i \left( \frac{-c}{d+1} \right)^{1/d}\right)\right| = d \left| \frac{c}{d+1} \right|^{1+\frac{1}{d}} $.
    Therefore, every critical orbit of $f_c$ is attracted into $\infty$ by (i). By \cref{thm: parabolic_attracting_critical}, $ f_c $ has no parabolic periodic point.
\end{proof}

\begin{proof}[Proof of \cref{thm: height_bound_non_unicritical}]
    The same argument as in the proof of \cref{thm: height_bound_unicritical} works in this situation.
    Indeed, we can prove 
    \[
        \prod_{v \in M_K^{0}} \max \{1, \|\gamma\|_v \} \le d^r
    \]
    by \cref{thm: non_unicritical_main} and 
    \[
        \prod_{v \in M_K^{\infty}} \max \{1, \|\gamma\|_v \} \le r(d)^r
    \]
    by \cref{lem: mandelbrot_bound_non_unicritical} for each $ \gamma $ such that $f_{\gamma}$ has a parabolic periodic point.
\end{proof}


\section{Determination of parabolic parameters of fixed degrees}
\label{sec: determination_of_parabolic_parameters}


Throughout this section, let $f_c(z) = z^2 + c$.
We estimate the degree of $\Delta_{n,m}$ in \cref{subsec: deg_estimation}.
In \cref{subsec: unconditional_determination_of_parabolic_parameters}, we determine the rational or imaginary quadratic parabolic parameters of $f_c(z)$.
We determine the parabolic parameters of $f_c(z)$ of the fixed degree over $\Q$ in \cref{subsec: Parabolic parameters under the irreducibility conjecture} assuming the irreducibility of $\Delta_{n,m}$ for all $n,m \in \Z_{\geq 1}$ with $m \mid n$.


\subsection{Degree estimate of $\Delta_{n,m}$}\label{subsec: deg_estimation}


Put $ \nu(n) := \sum_{k | n} 2^{k-1}\mu(n/k) $. The first few terms are listed.
\begin{table}[h]
    \begin{tabular}{c|cccccccc} \hline
        $n$ & 1& 2& 3& 4& 5& 6& 7& 8 \\ \hline
        $2^{n-2}$& 1/4& 1/2& 2& 4& 6& 8& 16& 32 \\
        $\nu(n)$& 1& 1& 3& 6& 15& 27& 63& 120 \\ 
        $7 \cdot 2^{n-2}$& 7/4& 7/2& 7& 14& 28& 56& 112& 224 \\ \hline
    \end{tabular}
\end{table}

Then \cite[Corollary 3.3]{Morton-Vivaldi} shows that
\begin{align}
    \deg_c \Delta_{n,m} = 
    \begin{cases}
        \nu(m) \varphi(n/m) & \text{ if } m \mid n, n \neq m, \\
        \nu(n) - \sum_{k|n, k\neq n} \nu(k)\varphi(n/k) & \text{ if } n = m,
    \end{cases} 
\end{align}
where $ \varphi(n) \coloneqq \abs{(\Z/n\Z)^\times} $ is the Euler totient function.

\begin{prop}\label{prop: nu}
    For any positive integer $ n $, the following estimate holds: 
    \[ 2^{n-3} \le \nu(n) \le 7\cdot 2^{n-3}. \]
\end{prop}

\begin{proof}
    The statement is directly verified if $ 1 \le n \le 5 $. 
    Henceforth, we assume $ n \ge 6 $. Since $ k \le n/2 $ if  $k|n, k\neq n$,
    \begin{align}
        \left|\frac{\nu(n)}{2^{n-1}} - 1 \right| = \frac{1}{2^{n-1}} \left| \sum_{k|n, k\neq n} 2^{k-1} \mu\left(\frac{n}{k}\right)\right| \le \frac{n 2^{n/2 - 1}}{2^{n-1}} = \frac{n}{2^{n/2}}.
    \end{align}
    Since $6/2^{6/2} = 3/4$ and the sequence $n/2^{n/2}$ is decreasing if $n \ge 3$, we have 
    \begin{align}
        \left|\frac{\nu(n)}{2^{n-1}} - 1 \right| \le \frac{3}{4}.
    \end{align}
    This inequality is equivalent to
    \[ 2^{n-3} \le \nu(n) \le 7\cdot 2^{n-3}.\]
\end{proof}

The following assertion is well-known; however, it is included here for the reader's convenience.

\begin{prop}\label{prop: euler_phi}
    For any positive integer $ n $, the following estimate holds:
    \[ \sqrt{\frac{n}{2}} \le \varphi(n) \le n. \]
\end{prop}

\begin{proof}
    The upper bound is trivial.
    Let's show the lower estimate. 
    First, we consider the case when $ n $ is odd and let $ n = p_1^{e_1} \cdots p_r^{e_r} $ be the prime factorization.
    Then, since $ x - 1 \ge \sqrt{x} $ for $ x \ge (3+\sqrt{5})/2 = 2.61803398\dots$ and $ e - 1/2 \ge e/2 $ for $e \ge 1$,
    \begin{align}
        \varphi(n) = \prod_{1\le i \le r} p_i^{e_i-1}(p_i - 1) \ge \prod_{1\le i \le r} p_i^{e_i-1/2} \ge \prod_{1\le i \le r} p_i^{e_i/2} = \sqrt{n} \ge \sqrt{\frac{n}{2}}.
    \end{align}
    If $n$ is even, we can write $n=2^e m$ using an integer $e$ greater than or equal to $1$ and an odd number $m$.
    Therefore, using the odd case, we have
    \begin{align}
        \varphi(n) = \varphi(2^e)\varphi(m) = 2^{e-1}\varphi(m) \ge 2^{(e-1)/2}\sqrt{m} = \sqrt{\frac{2^e m}{2}} = \sqrt{\frac{n}{2}}.
    \end{align}        
\end{proof}

\begin{prop}\label{prop: equal_case}
    For a positive integer $n \ge 5$, the following estimate holds:
    \[\nu(n) - \sum_{k|n, k\neq n} \nu(k)\varphi(n/k) \ge 2^{n/2}\]
\end{prop}

\begin{proof}
    The statement is directly verified if $ 5 \le n \le 23 $. 
    Henceforth, we assume $ n \ge 24 $.
    First, note that the function $ F(x) = 2^{x/2} - 7x^2 $ satisfies $ F(x) \ge 8 $ when $x \ge 24$. From this fact, \cref{prop: nu,prop: euler_phi}, we obtain the following estimate.
    \begin{align}
        &\nu(n) - \sum_{k \mid n, k\neq n} \nu(k)\varphi(n/k) \ge 2^{n/2}\\
        \ge & 2^{n-3} - \sum_{k \mid n, k\neq n} 7 \cdot 2^{k-3} \frac{n}{k} \\
        \ge & 2^{n/2 - 3} \left(  2^{n/2} - 7 n^2 \right) \\
        \ge & 2^{n/2}.
    \end{align}
\end{proof}

Next, we determine all pairs $ (n,m) $ such that $ \Delta_{n,m} $ has a given degree.
Put 
\[ d(n) := \deg_c(\Delta_{n,n}(c)) = \nu(n) - \sum_{k|n, k\neq n} \nu(k)\varphi(n/k).\]

\begin{cor}\label{cor: search_range}
    For $a,b \in \mathbb{Z}_{>0}$, the following holds.
    \begin{enumerate}\renewcommand{\labelenumi}{(\arabic{enumi})}
        \item If $\varphi(a) = b$, then $a \le 2b^2$.
        \item If $\nu(a) = b$, then $a \le 3 + \log(b)/\log(2)$.
        \item If $d(a) = b$, then $a \le 5$ or $ a \le 2\log(b)/\log(2) $.
    \end{enumerate}
\end{cor}

\begin{proof}
    Apply \cref{prop: euler_phi}, \cref{prop: nu,prop: equal_case}, respectively.
\end{proof}

\begin{prop}\label{prop: mn_pair}
    For $m,n \in \mathbb{Z}_{>0}$ with $m \mid n$, the following holds.
    \begin{enumerate}\renewcommand{\labelenumi}{(\arabic{enumi})}
        \item If $\deg_c(\Delta_{n,m}(c)) = 1$, then $(m,n) = (1,1),(1,2),(2,4),(3,3)$. 
        \item If $\deg_c(\Delta_{n,m}(c)) = 2$, then $(m,n) = (1,3),(1,4),(1,6),(2,6),(2,8),(2,12)$.
        \item If $\deg_c(\Delta_{n,m}(c)) = 3$, then $(m,n) = (3,6),(4,4)$.
        \item If $\deg_c(\Delta_{n,m}(c)) = 4$, then $(m,n) = (1,5),(1,8),(1,10),(1,12),(2,10),(2,16),(2,20),(2,24)$.
        \item There is no pair $(m,n)$ such that $\deg_c(\Delta_{n,m}(c)) = 5$.
    \end{enumerate}
\end{prop}

\begin{proof}
    Since
    \begin{align}
    \deg_c \Delta_{n,m} = 
    \begin{cases}
        \nu(m) \varphi(n/m) & \text{ if } m \mid n, n \neq m, \\
        d(n) & \text{ if } n = m,
    \end{cases} 
\end{align}
    if $\deg \Delta_{n,m} \le 5$, then $n,m,n/m \le 50$.
    Therefore, it is enough to check a finite number of candidates in \cref{tab: terms}.
\end{proof}


\subsection{Unconditional determination of parabolic parameters}
\label{subsec: unconditional_determination_of_parabolic_parameters}


In this subsection, we unconditionally give the set of rational or quadratic parabolic parameters for $f_c(z)$.
We heavily use computations involving Julia and SageMath.
In particular, we use the package IntervalArithmetic.jl \cite{IntervalArithmetic.jl} of Julia for the numerically verified computation.

We begin with the following lemma, which is a generalization of Huguin's argument written in \cite{BK22} to any number field.
\begin{lem}\label{lem: Huguin's argument}
    Let $K$ be a number field and $\gamma \in K$ be a parabolic parameter for $f_c(z)$.
    Fix a prime ideal $\pe$ of $\mathcal{O}_K$ dividing $2$.
    Then we have $4\gamma \not \equiv 0 \mod \pe$, where note that $4\gamma$ is an algebraic integer by \cref{thm: unicritical_Silverman_Conj}.
\end{lem}
\begin{proof}[Proof of \cref{lem: Huguin's argument}]
    Fix $\gamma \in K$ such that $4\gamma$ is an algebraic integer.
    It is enough to show that if $4\gamma \equiv 0 \mod \pe$, then $\gamma$ is not a parabolic parameter.
    Since we have
    \begin{align}
        \Disc_z(f_\gamma^{\circ n}(z) - z)
        &= \pm\Res_z(f_\gamma^{\circ n}(z) - z, -1 + \omega_n(z))\\
        &= \pm\delta_n(1),
    \end{align}
    there is a monic polynomial $P_n(t) \in \Z[t]$ such that
    \[
    \Disc_z(f_\gamma^{\circ n}(z) - z) = \pm P_n(4\gamma)
    \]
    by \cref{thm: unicritical_main}.
    If $4\gamma \equiv 0 \mod \pe$, since we have
    \begin{align}
        P_n(4\gamma) \equiv P_n(0) = \Disc_z(z^{2^n} - z) \equiv 1 \mod \pe,
    \end{align}
    the parameter $\gamma$ cannot be parabolic.
\end{proof}
    Before the proof of \cref{thm: parabolic_rational_parameters}, we explain how the parabolic parameters are determined in general.
    Any parabolic parameter $\gamma$ for $f_c(z)$ satisfies the following conditions:
    \begin{enumerate}
        \item\label{item: 4 gamma is algebraic integer} $4\gamma$ is an algebraic integer by \cref{cor: unicritical_integrality_of_parabolic_parameters}.
        \item\label{item: 2 does not divide the norm} $4\gamma$ is not in any prime ideal $\pe$ over $2$, or equivalently, the norm $N_{K/\Q}(4\gamma)$ is not divisible by $2$ by \cref{lem: Huguin's argument}.
        \item\label{item: in the disc} Every Galois conjugate of $\gamma$ is contained in the closed disc $D_0$ of radius $2$ centered at the origin since it is contained in the Mandelbrot set.
        \item\label{item: not in the main cardioid} $\gamma$ is not contained in the interior of the main cardioid $D_1$ of Mandelbrot set, where $D_1$ is defined by
        \begin{align}
            D_1 & \coloneq \{ 1/4 + r((1-\cos \theta)\exp(i\theta))\ |\ 0\leq \theta < 2\pi, 0<r<1\} \\
            &= \{\gamma \in \mathbb{C}\ |\ x + y \sqrt{-1} = 1 - 4\gamma, (x^2+y^2)(x^2+y^2-4x) - 4y^2 < 0 \}.
        \end{align}
        This is because $f_\gamma$ has an attracting fixed point if $\gamma \in D_1$.
        \item\label{item: not in the circle} $\gamma$ is not contained in the interior of the secondary large domain $D_2$ of Mandelbrot set, where $D_2$ is defined by
        \[
            D_2 \coloneq \left\{ \gamma \in \mathbb{C} \relmiddle| \abs{ \gamma + 1} < \frac{1}{4}\right\}.
        \]
        This is because $f_\gamma$ has an attracting cycle of period $2$ if $\gamma \in D_2$.
    \end{enumerate}
    For a fixed positive integer $D$, the number $\gamma \in \C$ of degree $[\Q(\gamma):\Q] \leq D$ satisfying \cref{item: 4 gamma is algebraic integer,item: in the disc} is finite.
    Thus, we can list candidates for parabolic parameters that satisfy all of these conditions.
    Using the numerically verified computation, we check two things.
    If the orbit of $0$ under $f_\gamma$ is not contained in $D_0$ up to the fixed number of iterates, the parameter $\gamma$ is not a parabolic parameter.
    If the orbit of $0$ looks to converge to some periodic cycle of period $N$, compute the exact description of $\Phi_N^*(z)$ or $\delta_N(x)$.
    If we can find a root of $\delta_N(x)$ whose absolute value is smaller than $1$, or equivalently, a root $\alpha$ of $\Phi_N^*(z)$ with $|\omega_N(\alpha)| < 1$ by the numerically verified computation, then the map $f_\gamma$ has an attracting periodic point and hence does not have a parabolic periodic point by \cref{thm: parabolic_attracting_critical}.
    For the remaining candidates $\gamma$, find integers $m,n$ with $m \mid n$ such that $\Delta_{n,m}(\gamma) = 0$. Believing the irreducibility of all $\Delta_{n,m}$, we can efficiently find such $n,m$. See \cref{subsec: Parabolic parameters under the irreducibility conjecture} for the degree estimate of $\Delta_{n,m}$.
    In principle, we can give candidates for parabolic parameters of the fixed degree over $\Q$ for $f_{d,c}(z) = z^d + c$ for every integer $ d \geq 2 $ in this way.

\begin{proof}[Proof of $ \cref{thm: parabolic_rational_parameters} $]
    Continue to refer to conditions \cref{item: 4 gamma is algebraic integer}--\cref{item: not in the circle} above.
    It is proved that the set of totally real parabolic parameters is
    \[
        \left\{ -\frac{7}{4}, -\frac{5}{4}, -\frac{3}{4}, \frac{1}{4}\right\}
    \]
    in \cite{BK22}.
    Since rational and real quadratic numbers are totally real, it is enough to determine the imaginary quadratic parameters.
    As a specific example of the determination of parabolic parameters, we also treat the case of rational parameters here.
    For a rational number $ \gamma \in \Q $, assume that the polynomial $ f_\gamma(z) = z^2 + \gamma $ has a parabolic periodic point in $ \C $, hence $\gamma$ satisfies all conditions \cref{item: 4 gamma is algebraic integer}--\cref{item: not in the circle}.
    Such rational numbers are only $-7/4, -5/4, -3/4, -1/4, 1/4$.
    For these parameters, we can directly check that there are parabolic or attracting periodic points, as shown in the following table.
    \begin{table}[htb]
  \caption{attracting/parabolic periodic point and multiplier}
  \label{table: case-by-case_unicritical}
  \centering
  \renewcommand{\arraystretch}{1.5} 
  \begin{tabular}{|c|ccccc|} \hline
    $\gamma$            & $-\frac{7}{4}$  & $-\frac{5}{4}$ & $-\frac{3}{4}$ & $-\frac{1}{4}$          & $\frac{1}{4}$ \\ \hline
    periodic point(s)   & $\alpha_1, \alpha_2, \alpha_3$  & $\frac{-1\pm\sqrt{2}}{2}$ & $-\frac{1}{2}$  & $\frac{1-\sqrt{2}}{2}$ & $\frac{1}{2}$ \\ \hline
    period              & $3$ & $2$ & $1$ & $1$ & $1$ \\ \hline
    multiplier          & $1$ & $-1$ & $-1$ & $1-\sqrt{2}$ & $1$           \\ \hline
  \end{tabular}
\end{table}

Next, we determine the (imaginary) quadratic parabolic parameters.
We consider $K = \Q(\sqrt{-D})$ with a squarefree positive integer $D \in \Z_{\geq 1}$.
Let
\begin{align}
    \beta = 
    \begin{cases}
        \sqrt{-D} & D\equiv 1,2 \mod 4\\
        \frac{1+\sqrt{-D}}{2} & D\equiv 3 \mod 4.
    \end{cases}
\end{align}
Then, we note that $\mathcal{O}_K = \Z[\beta]$.
Let $\gamma \in K$ be an imaginary quadratic parabolic parameter for $f_c(z) = z^2 + c$.
By \cref{item: 4 gamma is algebraic integer}, we have $4\gamma = a+ b\beta$ for some integers $a,b$ with $b\neq 0$.
Since the Galois conjugate $\overline{\gamma}$ is also a parabolic parameter for $f_c(z)$, we may assume that $b$ is positive by changing the role of $\gamma$ and $\overline{\gamma}$ if it is necessary.

When $D \equiv 1, 2 \mod 4$,
since we have $|\gamma| = \frac{\sqrt{a^2 + b^2 D}}{4} \geq \frac{b \sqrt{D}}{4} (\geq \sqrt{D}/4)$ and
    $|\gamma|<2$, we get
    \begin{align}
        D &= 1,5,13,17,21,29,33,37,41,53,57,61,\\
        &\hphantom{==} 2,6,10,14,22,26,30,34,38,42,46,58,62,
    \end{align}
    where note that $D$ is square free.

When $D \equiv 3 \mod 4$, since we have $|\gamma| = \frac{\sqrt{(2a+b)^2 + b^2 D}}{2\cdot 4} \geq b\sqrt{D}/8$ and $|\gamma| < 2$, we get
    \begin{align}
        D &= 3,7,11,15,19,23,31,35,39,43,47,51,55,59,67,71,79,83,87,91,95,\\
        &\hphantom{=} 103,107,111,115,119,123,127,131,139,143,151,155,159,163,167,179,\\
        &\hphantom{=} 183,187,191,195,199,203,211,215,219,223,227,231,235,239,247,251,255.
    \end{align}

By $|\gamma| < 2$, the possible values $(D,b)$ are as follows:
\[
    \begin{array}{|c||c|c|c|c|}\hline
        D\equiv 1\ (4) & 1 & 5 & 13 & \geq 17\\ \hline
        b & 1,2, \ldots,7 & 1,2,3 & 1,2 & 1 \\ \hline
    \end{array}
\]
\[
    \begin{array}{|c||c|c|c|c|}\hline
        D\equiv 2\ (4) & 6 & 10 & 14 & \geq 22\\ \hline
        b & 1,2, \ldots,5 & 1,2,3 & 1,2 & 1 \\ \hline
    \end{array}
\]
\[
    \begin{array}{|c||c|c|c|c|c|}\hline
        D\equiv 3\ (8) & 3 & 11 & 19 & 35,43,51,59 & \geq 67\\ \hline
        b & 1,\ldots,12 & 1,\ldots,4 & 1,2,3 & 1,2 & 1 \\ \hline
    \end{array}
\]
\[
    \begin{array}{|c||c|c|c|c|c|}\hline
        D\equiv 7\ (8) & 7 & 15 & 23 & 31,39,47,55 & \geq 71 \\ \hline
        b & 1,\ldots, 6 & 1,\ldots,4 & 1,2,3 & 1,2 & 1 \\ \hline
    \end{array}
\]

Depending on the congruence condition of $D$, the value of the norm $N_{K/\Q}(4\gamma) \mod 2$ is as follows.
\begin{align}
    N_{K/\Q}(4\gamma) \equiv
    \begin{cases}
        (a+b)^2 \equiv a+b & (D\equiv 1 \mod 4)\\
        a^2     \equiv a   & (D\equiv 2 \mod 4)\\
        a^2+ab+b^2 \equiv (a+1)(b+1)+ 1 & (D\equiv 3 \mod 8)\\
        a^2 + ab = a(a+b)   & (D\equiv 7 \mod 8).
    \end{cases}
\end{align}

By the computation with treating exact algebraic numbers, we can list the possible values of $(D, a, b)$ such that the corresponding parameter $\gamma = (a + b\beta)/4$ satisfies all conditions \cref{item: 4 gamma is algebraic integer}--\cref{item: not in the circle}.
The number of remaining parameters are $110$, $0$, $384$, and $48$ for $D \equiv 1 \mod 4$, $D \equiv 2 \mod 8$, $D \equiv 3 \mod 8$, and $D \equiv 7 \mod 8$, respectively.

If $\gamma = (a + b\beta)/4$ is a parabolic parameter, the set $\{f_\gamma^{n}(0)\ |\ 0\leq n \leq 20\}$ must be contained in the disc $D_0$ by \cref{thm: parabolic_attracting_critical}.
By treating the parameters as an exact algebraic number with SageMath, we can see that only the parameters corresponding to
$(D,a,b) = \underline{(1,-4,1)}$,
$(1, 0, 3)$,
$\underline{(1, 1, 2)}$,
$\underline{( 3, -5, 1 )}$,
$\underline{( 3, -4, 1 )}$,
$( 3, -3, 3 )$,
$\underline{( 3, -2, 3 )}$,
$( 3, -1, 3 )$,
$\underline{( 3, 1, 1 )}$,
$( 11, -3, 1 )$,
$( 11, -1, 2 )$,
$( 35, -1, 1 )$,
$( 43, -1, 1 )$,
$( 51, -1, 1 )$
satisfy this condition, where the underlined tuples $(D,a,b)$ correspond to parabolic parameters by \cref{prop: mn_pair} and \cref{tab: Delta_mn}.
See also \cref{table: case-by-case_unicritical_imaginary_quadratic} for the explicit description of the parabolic periodic points for each parabolic parameter.

It is enough to show that the parameters $\gamma = (a + b \beta)/4$ corresponding to the tuples $(D, a, b)$ without underlining are not parabolic.
As is written before the proof, the numerically verified computation tells us that each parameter has the following properties:

\begin{table}[h]
    \begin{tabular}{|c|c|l|}\hline
        $( D, a, b )$ & $4\gamma$ & property of $f_{\gamma}$\\ \hline \hline
        $( 1, 0, 3 )$ & $3\sqrt{-1}$ & $34$-rd orbit of $0$ is outside $D_0$.\\
        $( 3, -3, 3 )$ & $(-3+3\sqrt{-3})/2$ & $149$-th orbit of $0$ is outside $D_0$.\\
        $( 3, -1, 3 )$ & $(1 + 3\sqrt{-3})/2$ & $23$-rd orbit of $0$ is outside $D_0$.\\
        $( 11, -3, 1 )$ & $(-5 + \sqrt{-11})/2$ & Has an attractor of period $7$.\\
        $( 11, -1, 2 )$ & $\sqrt{-11}$ & $22$-nd orbit of $0$ is outside $D_0$.\\
        $( 35, -1, 1 )$ & $(-1 + \sqrt{-35})/2$ & Has an attractor of period $3$.\\
        $( 43, -1, 1 )$ & $(-1 + \sqrt{-43})/2$ & Has an attractor of period $3$.\\
        $( 51, -1, 1 )$ & $(-1 + \sqrt{-51})/2$ & $26$-th orbit of $0$ is outside $D_0$.\\ \hline
    \end{tabular}
\end{table}

Here, the fact that $f_\gamma(z)$ has the attracting periodic points for $\gamma = (-1+\sqrt{-43})/8$, $(-1+\sqrt{-35})/8$, $(-5+\sqrt{-11})/8$ is checked by the computation as follows.
Note that if the period of the attracting periodic point is large, computing the multiplier polynomials and the dynatomic polynomials can be difficult.
The $3$-rd multiplier polynomial for $\gamma = (-1+\sqrt{-43})/8$ is
\[
    \delta_3(x) = x^2 + (-\sqrt{-43} - 15)x - \sqrt{-43} - 12.
\]
One of the solutions is $x \approx -0.794010151370859 - 0.0814301974776708\sqrt{-1}$ whose absolute value is smaller than $1$.
The $3$-rd multiplier polynomial for $\gamma= (-1+\sqrt{-35})/8$ is
\[
    \delta_3(x) = x^2 + (-\sqrt{-35} - 15)x + 1.
\]
One of the solutions is $x \approx 0.0577941490144990 - 0.0229713342664962\sqrt{-1}$ whose absolute value is smaller than $1$.
The $7$-th dynatomic polynomial for $\gamma = (-5+\sqrt{-11})/8$ has roots approximated to 
\begin{align}
    &-0.828984535519859 + 0.214178626239443\sqrt{-1},\\
    &-0.652947538065546 - 0.0615405901183171\sqrt{-1},\\
    &-0.628270372618086 + 0.416522134057810\sqrt{-1},\\
    &-0.473809395908544 + 0.502436943027795\sqrt{-1},\\
    &-0.403767027041217 - 0.108798933972456\sqrt{-1},\\
    &-0.202446756765942 + 0.494943652412456\sqrt{-1},\\
    &0.0163428674089548 + 0.0594765663896106\sqrt{-1}
\end{align}
that consists a $f_\gamma$-periodic orbit.
The multiplier of these roots is approximately $0.198642942872117 - 0.476329104815534\sqrt{-1}$ whose absolute value is smaller than $1$.
These computations mean that $f_\gamma$ has an attracting periodic point of the corresponding period for each $\gamma$.
\end{proof}

\begin{table}[htb]
	\caption{parabolic periodic point and multiplier for imaginary quadratic parameter}
	\label{table: case-by-case_unicritical_imaginary_quadratic}
	\centering
	\renewcommand{\arraystretch}{1.5} 
	\begin{tabular}{|c|cccccc|} \hline
		$\gamma$ & $ \frac{1 + 2\sqrt{-1}}{4} $ & $ \frac{-4 + \sqrt{-1}}{4} $ & $ \frac{-1 + 3\sqrt{-3}}{8} $ & $ \frac{3 + \sqrt{-3}}{8} $ & $ \frac{-7 + \sqrt{-3}}{8} $ & $ \frac{-9 + \sqrt{-3}}{8}$ \\ \hline
		periodic point(s)   & $ \frac{\iu}{2} $  & $ \frac{-1 \pm \sqrt{1 - \iu}}{2} $ & $ \frac{-1 + \sqrt{-3}}{4} $ & $ \frac{1 + \sqrt{-3}}{4} $ & $ \frac{-2 \pm (-\sqrt{3} + \iu)}{4} $ & $ \frac{-2 \pm \sqrt{6 -2 \sqrt{-3}}}{4} $ \\ \hline
		period & $1$ & $2$ & $1$ & $1$ & $2$ & $ 2 $ \\ \hline
		multiplier & $ \iu $ & $ \iu $ & $ \frac{-1 + \sqrt{-3}}{2} $ & $ \frac{1 + \sqrt{-3}}{2} $ & $ \frac{-1 + \sqrt{-3}}{2} $ & $ \frac{-1 + \sqrt{-3}}{2} $ \\ \hline
	\end{tabular}
\end{table}


\subsection{Parabolic parameters under the irreducibility conjecture}
\label{subsec: Parabolic parameters under the irreducibility conjecture}
In this section, we determine the parabolic parameters of $f_c(z)= z^2 + c$ of small degrees under \cref{conj: irreducibility}.

From the results in \cref{subsec: deg_estimation}, we get the tables written in \cref{subsec: lists_on_deg_of_Delta}. The results in this section follow from these tables.
Rational and quadratic parabolic parameters are unconditionally determined in \cref{thm: parabolic_rational_parameters}.

\begin{prop}\label{prop: cubic}
    If \cref{conj: irreducibility} is true, then each cubic parabolic parameter for $f_c(z) = z^2 + c$ is a root of one of the following  polynomials:
    \begin{align}
        64c^{3}& + 128c^{2} + 72c + 81,\\
        64c^{3}& + 144c^{2} + 108c + 135.
    \end{align}
\end{prop}

\begin{rem}
    Both of these two polynomials have a real root and two imaginary roots.
\end{rem}

\begin{prop}\label{prop: quartic}
    If \cref{conj: irreducibility} is true, then each quartic parabolic parameter for $f_c(z) = z^2 + c$ is a root of one of the following  polynomials: 
    \begin{align}
        256c^{4}& + 64c^{3} + 16c^{2} - 36c + 31,\\
        256c^{4}& + 32c^{2} - 64c + 17,\\
        256c^{4}& - 192c^{3} + 144c^{2} - 68c + 11,\\
        256c^{4}& + 128c^{3} - 16c^{2} - 56c + 13,\\
        256c^{4}& + 1088c^{3} + 1744c^{2} + 1252c + 341,\\
        256c^{4}& + 1024c^{3} + 1536c^{2} + 1024c + 257,\\
        256c^{4}& + 960c^{3} + 1360c^{2} + 860c + 205,\\
        256c^{4}& + 1024c^{3} + 1520c^{2} + 992c + 241.
    \end{align}
\end{prop}

\begin{rem}
    None of these polynomials have real roots.
\end{rem}

\begin{prop}\label{prop: quintic}
    If \cref{conj: irreducibility} is true, then there is no parabolic parameter of degree $ 5 $ and $ 7 $.
\end{prop}


\section{Remarks on other polynomial families} \label{sec: other_families}


In this section, we give some remarks on other polynomial families.


\subsection{The family $ (z - c)z^d + c$} \label{subsec: (z-c)z^d+c}


The method in \cref{sec: non_unicritical} works for the polynomial family $ (\tilde{f}_c(z) = (z - c)z^d + c)_c$.
We use the same notation in \cref{sec: non_unicritical} to see this. 
In addition, we define the dynatomic polynomial for $ \tilde{f}_c $ and the multiplier polynomial for $ \tilde{f}_c $ by
\begin{align}
    \tilde{\Phi}_m^{*}(z) &:= \prod_{k | m} \left( \tilde{f}_c^{\circ k}(z) - z \right)^{\mu(m/k)},\\
    \tilde{\delta}_{m}(x)^m &:= \mathrm{Res}_z \left(\tilde{\Phi}_m^{\ast}(z), x - (\tilde{f}_c^{\circ m})'(z) \right).
\end{align}
Then, the following theorem holds.

\begin{thm} \label{thm: integrality_for_f_tilde}
  For a positive integer $ m $, the polynomial $ d^{(d-1)\varepsilon_m + \deg_z \tilde{\Phi}_m / (d+1) } \tilde{\delta}_{m}(x) $ is an element of $\Z[(dc)^d, x]$ and is monic in $ (dc)^d $ up to multiplication by $ \pm 1 $.
\end{thm}

The quantity $ \varepsilon_m $ in \cref{thm: integrality_for_f_tilde} is defined as in \cref{{sec: non_unicritical}}. Namely, 
\begin{align}
    \varepsilon_m \coloneqq \sum_{k | n} \mu(n/k)  =
	\begin{cases}
		1 & \text{ if } m=1, \\
		0 & \text{ if } m \neq 1.
	\end{cases}
\end{align}

For the family $ \left(\tilde{f}_c(z) = (z - c)z^d + c\right)_c $, the following diagram commutes
\begin{equation}
    \begin{tikzcd}
		\bbP^1 \arrow[r, "\nu"] \arrow[d, "\tilde{f}_c" '] & \bbP^1 \arrow[d, "\tilde{f}_{c \cdot \zeta_d }"] \\
		\bbP^1 \arrow[r, "\nu" '] & \bbP^1
    \end{tikzcd}
\end{equation}
for $ \nu \colon \bbP^1 \longrightarrow \bbP^1 ; z \mapsto \zeta_d z$, where $ \zeta_d $ is a primitive $ d $-th root of unity.
Thus the discussion in the proof of \cref{lem: unicritical_integrality_of_R} \cref{item: lem: unicritical_invariance_of_coefficients} shows that $ \mathrm{Res}_z(\tilde{f}_c^{\circ k}(z) - z, x - (\tilde{f}_c^{\circ m})'(z)) \in \Z[c^d,x]$.
So, it is enough to show that $ d^{\deg_z \tilde{\Phi}_m / (d+1) } \tilde{\delta}_{m}(x) $ is an element of $ \Z[dc, x] $ and monic in $ dc $ up to multiplication by $ \pm 1 $.

By \cref{lem: non_unicritical_tilde_f} \cref{item: lem: non_unicritical_tilde_f: 1} and \cref{item: lem: non_unicritical_tilde_f: 2}, we have $ \tilde{f}_c^{\circ k}(z) - z = (z - c)\left( (F_k(z) + 1)^d - 1 \right)$ and 
\[ (\tilde{f}_c^{\circ m})'(z) = (F_m(z) + 1)^{d-1} (d+1)^m \calF_m \left( -\frac{dc}{d+1}, z \right).\]
We define a polynomial $H_k(z)$ by $ H_k(z) = (F_k(z) + 1)^d - 1$. Then, we have 
\begin{align}
    \tilde{\delta}_{m}(x)^m
    &= \mathrm{Res}_z (\tilde{\Phi}_m^{*}(z), x - (\tilde{f}_c^{\circ m})'(z) )\\
    &= \prod_{k | m} \mathrm{Res}_z \left( (z - c) H_k(z) , x -  (F_m(z) + 1)^{d-1} (d+1)^m \calF_m \left( -\frac{dc}{d+1}, z \right) \right)^{\mu(m/k)} \\
    &= \prod_{k | m} \left( (x - c^{md}) \mathrm{Res}_z \left( H_k(z) , x -  (F_m(z) + 1)^{d-1} (d+1)^m \calF_m \left( -\frac{dc}{d+1}, z \right) \right) \right)^{\mu(m/k)} \\
    &= (x - c^{md})^{\varepsilon_m} \prod_{k | m} \Res_z \left( H_k(z) , x -  (F_m(z) + 1)^{d-1} (d+1)^m \calF_m \left( -\frac{dc}{d+1}, z \right) \right)^{\mu(m/k)} .
\end{align}
Putting $ \tilde{R}_{k,m}(x) = \Res_z \left( H_k(z) , x -  (F_m(z) + 1)^{d-1} (d+1)^m \calF_m \left( -\frac{dc}{d+1}, z \right) \right)$, the statement \cref{thm: integrality_for_f_tilde} is reduced to the following lemma.

\begin{lem} \label{lem: f_tilde}
  The following statements hold for positive integers $k$ and $m$, which satisfy $ k|m $.
  \begin{enumerate}
      \item \label{item: integrality_f_tilde}
        $ d^{m((d+1)^{k-1}-1)}\tilde{R}_{k,m}(x) \in \Z[dc, x] $.
      \item \label{item: leading_coefficient_f_tilde}
        The leading coefficient of $ \tilde{R}_{k,m}(x) $ in $ c $ is $ \pm d^{md(d+1)^{k-1}}$.
  \end{enumerate}
\end{lem}

\begin{proof}
    Since $ F_m(z) + 1 = \prod_{i=0}^{m-1} \tilde{f}_c^{\circ i}(z) $, $ H_k(z) = \left(\prod_{i=0}^{k-1} \tilde{f}_c^{\circ i}(z) \right)^d - 1 $, and $ Z(H_k) \subset Z(\tilde{f}_c^{\circ k}(z) - z)$,
    the element $ \xi_{\alpha} := (F_k(\alpha) + 1)^{d-1} $ is a $d$-th root of unity for each $ \alpha \in Z(H_k) $. 
    Therefore, we have the expression
    \begin{align}
        & \Res_z \left( H_k(z), x -  (F_m(z) + 1)^{d-1} (d+1)^m \calF_m \left( y, z \right) \right)\\
        =& \prod_{\alpha \in Z(H_k)} \left( x -  \xi_{\alpha} (d+1)^m \calF_m \left( y, \alpha \right) \right).
    \end{align}
    Therefore, the same argument as in \cref{prop: non_unicritical_degree} shows that
    \[ \deg_c \Res_z \left( H_k(z) , x -  (F_m(z) + 1)^{d-1} (d+1)^m \calF_m \left( y, z \right) \right) = m((d+1)^{k-1}-1).\]
    This assertion proves (i).
    Next, we prove the assertion (ii).
    By using the same discussion as in \cref{prop: non_unicritical_valuation_F_k}, we get the equality
    \[ v \left( (d+1)^m \calF_m \left( -\frac{d}{d+1} c, \alpha \right) \right) = -m \] 
    for each $ \alpha \in Z(H_k)$. Therefore, we can prove
    \[ \deg_c \tilde{R}_{k,m}(x) = m((d+1)^{k}-1) \]
    and
    \[ \LT_c\left(\tilde{R}_{k,m}(x)\right) = \pm \LT_c \left( (d+1)^{\deg_z H_k} H_k\left( \frac{dc}{d+1} \right) \right)^m = \pm d^{md(d+1)^{k-1}} c^{m((d+1)^{k}-1)}.
    \]
\end{proof}


\subsection{The family $ z^{d+2} + cz^2 $ } \label{subsec: z^{d+2}+cz^2}


Throughout this subsection, we put $ f_c(z) \coloneq z^{d+2} + cz^2$ for an integer $ d \ge 3 $.
In this subsection, we prove \cref{thm: other_family}.
More precisely, we prove the following proposition and then observe that the same claim as in \cref{thm: unicritical_Silverman_Conj} does not hold in this situation.

\begin{prop}\label{prop: another_family}
    The first multiplier polynomial for $ f_c(z) = z^{d+2} + cz^2 $ is given by the formula
    \[ \delta_1(x) = x \left( (x-(d+2))^{d+1} + c (-cd)^d (x-2)  \right).\]
    In particular, for each integer $ n > 1 $, the following statement holds.
    \[ \LT_c(\Delta_{n,1}(c)) = \pm d^{d\varphi(n)} \cyc_n(2) c^{(d+1)\varphi(n)} .\]
\end{prop}

Although we give the computation only for $ m=1 $, a better observation might be obtained by computing the $m$-th multiplier polynomial in general.

\begin{proof}[Proof of \cref{prop: another_family}]
    A direct computation shows that if $ \alpha $ is a fixed point of $ f_c(z) = z^{d+2} + cz^2 $, then $ \alpha = 0 $ or $ \alpha \in Z(z^{d+1}+cz-1) $.
    By the definition of the multiplier, we obtain $ \omega_1(0) = 0 $ and $ \omega_1(\alpha) = (d+2) \alpha^{d+1} + 2c\alpha = (d+2) - cd\alpha $ for $\alpha \in Z(z^{d+1}+cz-1) $.
    Hence, we can explicitly calculate $\delta_1(x)$ as follows.
    \begin{align}
    \delta_1(x) &= \prod_{f_c(\alpha)=\alpha} (x - \omega_1(\alpha))\\
    &= x \prod_{\alpha \in Z(z^{d+1}+cz-1)} (x - (d+2) + cd\alpha) \\
    &= x (-cd)^{d+1} \prod_{\alpha \in Z(z^{d+1}+cz-1)} \left(\frac{x - (d+2)}{-cd} - \alpha\right) \\
    &= x (-cd)^{d+1} \left( \left(\frac{x - (d+2)}{-cd}\right)^{d+1} + c\frac{x - (d+2)}{-cd} -1 \right) \\
    &= x \left( (x-(d+2))^{d+1} + c (-cd)^d (x-2)  \right).
    \end{align}
    
    By the explicit calculation of the multiplier polynomial $ \delta_1(x) $, we can easily calculate the leading term of $ \Delta_{n,1}(c) $ as follows.
    
    \begin{align}
    \LT_c(\Delta_{n,1}(c)) &= \LT_c\left( \prod_{\Phi_n^{\mathrm{cyc}}(\zeta) = 0} \delta_1(\zeta) \right) \\
    &= \prod_{\Phi_n^{\mathrm{cyc}}(\zeta) = 0} \zeta c (-cd)^d (\zeta - 2)\\
    &= \pm d^{d\varphi(n)} \Phi_n^{\mathrm{cyc}}(2) c^{(d+1)\varphi(n)}.
    \end{align}
\end{proof}

Bang's theorem \cite{Bang86} on cyclotomic numbers shows that $ \Phi_n^{\mathrm{cyc}}(2) $ has a prime divisor $ q $ such that $ q \equiv 1 \text{ mod } n $ except for $ \Phi_6^{\mathrm{cyc}}(2) = 3 $. 
Therefore, the uniform integrality cannot be guaranteed as in other examples.


\appendix
\def\thesection{\Alph{section}}

\addcontentsline{toc}{section}{Appendix}



\section{Equalities on dynatomic modular curves} \label{sec: equality}


The first equality in \cref{prop: equality_main} follows from the following lemma.

\begin{lem} \label{lem: equality_dyn_poly}
	For coprime integers $ l $ and $ n $, we have
	\[
	\prod_{d \mid l} \Phi_{f, dn}^\ast (z)
	= \Phi_{f^{\circ l}, n}^\ast (z).
	\]
\end{lem}

\begin{proof}
	For a divisor $ d $ of $ l $, we have
	\[
	\Phi_{f, dn}^\ast (z)
	= \prod_{e \mid d} \prod_{e' \mid n} \left( f^{\circ ee'}(z) - z \right)^{\mu(d/e) \mu(n/e')} 
	= \prod_{e \mid d} \Phi_{f^{\circ e}, n}^\ast (z)^{\mu(d/e)}
	\]
	since $ d $ and $ n $ are coprime.
	Thus, we have
	\[
	\prod_{d \mid l} \Phi_{f, dn}^\ast (z)
	= \prod_{d \mid l} \prod_{e \mid d} \Phi_{f^{\circ e}, n}^\ast (z)^{\mu(d/e)}.
	\]
	By letting $ e' \coloneqq d/e $, this is equal to
	\[
	\prod_{e \mid l} \prod_{e' \mid l/e} \Phi_{f^{\circ e}, n}^* (z)^{\mu(e')}
	= \prod_{e \mid l} \Phi_{f^{\circ e}, n}^\ast (z)^{\sum_{e' \mid l/e} \mu(e')}
	= \Phi_{f^{\circ l}, n}^* (z).
	\]
\end{proof}

\begin{proof}[Proof of $ \cref{prop: equality_main} $]
	It suffices to show the second equality.
	Since $m' > 1$, we have
	\[
	\Phi_{f^{\circ \widetilde{m}}, m'}^\ast (z)
	= \prod_{d \mid m'} \left( f^{\circ \widetilde{m}d}(z) - z \right)^{\mu(m'/d)}
	= \prod_{d \mid m'} \left( \frac{f^{\circ \widetilde{m}d}(z) - z}{z - \alpha} \right)^{\mu(m'/d)}.
	\]
	By taking a limit as $ z \to \alpha $, we obtain
	\[
	\Phi_{f^{\circ \widetilde{m}}, m'}^\ast (\alpha)
	= \prod_{d \mid m'} \left( \left( f^{\circ \widetilde{m}d} \right)' (\alpha) - 1 \right)^{\mu(m'/d)}
	= \prod_{d \mid m'} \left( \lambda^{\widetilde{m}d/k} - 1 \right)^{\mu(m'/d)}
	= \cyc_{m'} \left( \lambda^{\widetilde{m}/k} \right).
	\]
\end{proof}

\newpage
\section{Lists of $\delta_m(x)$ and $\Delta_{n,m}$ for our polynomials}
\label{sec: list_of_delta_and_Delta}

This section shows computational results on $\delta_{m}$ and $\Delta_{n,m}$.
The following calculations were performed using SageMath\cite{sagemath}.
Let $f_c(z) = z^d +c$.
Then, for integers $d\geq 2$ and $m\geq 1$, the polynomial $\Psi(x, C)$ such that $\delta_m(x) = \Psi(x,d^d c^{d-1})$ are explicitly written as follows.

\begin{table}[h]
    \begin{tabular}{|c|c|c|p{14cm}|}
    \hline
    $d$ & $m$ & $\deg_z \Phi_m^*$ &  $\Psi(x, C)$ such that $\delta_m(x) = \Psi(x,d^d c^{d-1})$ for $f_c(z) = z^d +c$.\\ \hline \hline
    $2$ & $1$ & $2$ & $C + x^{2} - 2 x$\\ \hline
    $2$ & $2$ & $2$ & $- C + x - 4$\\ \hline
    $2$ & $3$ & $6$ & $C^{3} + 8 C^{2} - 2 C x + 16 C + x^{2} - 16 x + 64$\\ \hline \hline
    $3$ & $1$ & $3$ & $- C + x^{3} - 6 x^{2} + 9 x$\\ \hline
    $3$ & $2$ & $6$ & $- C^{2} + 6 C x - 54 C + x^{3} - 27 x^{2} + 243 x - 729$\\ \hline
    $3$ & $3$ & $24$ & $C^{8} + C^{7} x + 162 C^{7} + C^{6} x^{2} + 108 C^{6} x + 10935 C^{6} - 2 C^{5} x^{3} + 54 C^{5} x^{2} + 2187 C^{5} x + 393660 C^{5} - 2 C^{4} x^{4} + 162 C^{4} x^{3} - 1458 C^{4} x^{2} - 196830 C^{4} x + 9034497 C^{4} - 2 C^{3} x^{5} - 324 C^{3} x^{4} + 23328 C^{3} x^{3} - 157464 C^{3} x^{2} - 11691702 C^{3} x + 172186884 C^{3} + C^{2} x^{6} + 162 C^{2} x^{5} - 19683 C^{2} x^{4} + 551124 C^{2} x^{3} + 1594323 C^{2} x^{2} - 258280326 C^{2} x + 2711943423 C^{2} + C x^{7} - 108 C x^{6} + 2187 C x^{5} + 196830 C x^{4} - 13286025 C x^{3} + 344373768 C x^{2} - 4261625379 C x + 20920706406 C + x^{8} - 216 x^{7} + 20412 x^{6} - 1102248 x^{5} + 37200870 x^{4} - 803538792 x^{3} + 10847773692 x^{2} - 83682825624 x + 282429536481$ \\ \hline \hline
    $4$ & $1$ & $4$ & $C + x^{4} - 12 x^{3} + 48 x^{2} - 64 x$ \\ \hline
    $4$ & $2$ & $12$ & $C^{3} - C^{2} x^{2} - 48 C^{2} x + 768 C^{2} - C x^{4} + 1536 C x^{2} - 32768 C x + 196608 C + x^{6} - 96 x^{5} + 3840 x^{4} - 81920 x^{3} + 983040 x^{2} - 6291456 x + 16777216$
        \\ \hline\hline
    $5$ & $1$ & $5$ & $- C + x^{5} - 20 x^{4} + 150 x^{3} - 500 x^{2} + 625 x$\\ \hline
    $5$ & $2$ & $20$ & $C^{4} - 20 C^{3} x^{2} - 500 C^{3} x + 12500 C^{3} - 2 C^{2} x^{5} - 150 C^{2} x^{4} + 9375 C^{2} x^{3} + 31250 C^{2} x^{2} - 5859375 C^{2} x + 58593750 C^{2} + 20 C x^{7} - 2500 C x^{6} + 112500 C x^{5} - 1562500 C x^{4} - 39062500 C x^{3} + 1757812500 C x^{2} - 24414062500 C x + 122070312500 C + x^{10} - 250 x^{9} + 28125 x^{8} - 1875000 x^{7} + 82031250 x^{6} - 2460937500 x^{5} + 51269531250 x^{4} - 732421875000 x^{3} + 6866455078125 x^{2} - 38146972656250 x + 95367431640625$
    \\ \hline
    \end{tabular}
\end{table}

Let $f_c(z) = z^{d+1} + cz$.
Then, for integers $d\geq 1$ and $m\geq 1$, the polynomials $\Psi(x, C)$ such that $d^{(deg_z \Phi_m^*)/(d+1)}\delta_m(x) = \Psi(x,dc)$ are explicitly written as follows.
\begin{table}[h]
    \begin{tabular}{|c|c|c|p{14cm}|}
    \hline
    $d$ & $m$ & $\deg_z \Phi_m^\ast$ &  $\Psi(x, C)$ such that $d^{(deg_z \Phi_m^*)/(d+1)}\delta_m(x) = \Psi(x,dc)$ for $f_c(z) = z^{d+1}+cz$\\ \hline \hline
    $2$ & $1$ & $3$ & $\left(- C + 2 x\right) \left(C + x - 3\right)^{2}$ \\ \hline
    $2$ & $2$ & $6$ & $\left(C^{2} + 2 x - 18\right)^{2} \left(- C^{2} - 6 C + x - 9\right)$ \\ \hline
    $2$ & $3$ & $12$ &$C^{12} + 3 C^{11} - 72 C^{10} + C^{9} x - 216 C^{9} + 1539 C^{8} - 99 C^{7} x + 4617 C^{7} - 12 C^{6} x^{2} - 108 C^{6} x - 5832 C^{6} - 36 C^{5} x^{2} + 2916 C^{5} x - 17496 C^{5} + 396 C^{4} x^{2} + 5832 C^{4} x - 131220 C^{4} + 4 C^{3} x^{3} + 972 C^{3} x^{2} - 14580 C^{3} x - 393660 C^{3} + 48 C^{2} x^{3} - 1296 C^{2} x^{2} - 34992 C^{2} x + 944784 C^{2} - 144 C x^{3} + 11664 C x^{2} - 314928 C x + 2834352 C + 16 x^{4} - 1728 x^{3} + 69984 x^{2} - 1259712 x + 8503056$\\ \hline\hline
    $3$ & $1$ & $4$ & $\left(- C + 3 x\right) \left(C + x - 4\right)^{3}$ \\ \hline
    $3$ & $2$ & $12$ & $\left(- C^{4} - 4 C^{3} - 2 C^{2} x - 16 C^{2} - 12 C x + 192 C + 3 x^{2} - 96 x + 768\right)^{3}$ \\ \hline\hline
    $4$ & $1$ & $5$ & $\left(- C + 4 x\right) \left(C + x - 5\right)^{4}$ \\ \hline
    $4$ & $2$ & $20$ & $\left(- C^{2} - 10 C + x - 25\right)^{2} \left(- C^{4} - 3 C^{2} x - 25 C^{2} + 4 x^{2} - 200 x + 2500\right)^{4}$ \\ \hline\hline
    $5$ & $1$ & $6$ & $(- C + 5 x) (C + x - 6)^{5}$ \\ \hline
    $5$ & $2$ & $30$ & $(C^{6} + 6 C^{5} + 3 C^{4} x + 36 C^{4} + 24 C^{3} x + 216 C^{3} - 9 C^{2} x^{2} + 288 C^{2} x + 1296 C^{2} - 30 C x^{2} + 2160 C x - 38880 C + 5 x^{3} - 540 x^{2} + 19440 x - 233280)^{5}$ \\ \hline
    \end{tabular}
\end{table}

\newpage
Let $f_c(z) = (z-c)z^d + c$.
Then, for integers $d\geq 1$ and $m\geq 1$, there is a polynomial $\Psi(x, C)$ such that $d^{(d-1)\varepsilon_m + (deg_z \Phi_m^*)/(d+1)}\delta_m(x) = \Psi(x,(dc)^d)$, where $\varepsilon_m$ is $1$ when $m=1$, and is $0$ otherwise.
The polynomials $\Psi(x, C)$ for some pairs $(d,m)$ are explicitly written as follows.
\begin{table}[h]
    \begin{tabular}{|c|c|c|p{14cm}|}\hline
    $d$ & $m$ & $\deg_z \Phi_m^\ast$ &  $\Psi(x, C)$ such that $d^{(d-1)\varepsilon_m + (deg_z \Phi_m^*)/(d+1)}\delta_m(x) = \Psi(x,(dc)^d)$ for $f_c(z) = (z-c)z^d + c$\\
    \hline\hline
    $2$ & $1$ & $3$ & $\left(- C + 4 x\right) \left(- C + x^{2} - 6 x + 9\right)$ \\ \hline
    $2$ & $2$ & $6$ & $\left(C + 2 x - 18\right) \left(- C^{2} + C x + 27 C + 2 x^{2} - 36 x + 162\right)$ \\ \hline \hline
    $3$ & $1$ & $4$ & $\left(- C + 27 x\right) \left(C + x^{3} - 12 x^{2} + 48 x - 64\right)$ \\ \hline
    $3$ & $2$ & $12$ & $- C^{4} - 12 C^{3} x + 704 C^{3} - 26 C^{2} x^{3} + 240 C^{2} x^{2} + 5376 C^{2} x - 151552 C^{2} - 324 C x^{4} + 13824 C x^{3} - 165888 C x^{2} + 7077888 C + 27 x^{6} - 2592 x^{5} + 103680 x^{4} - 2211840 x^{3} + 26542080 x^{2} - 169869312 x + 452984832$ \\ \hline \hline
    $4$ & $1$ & $5$ & $\left(- C + 256 x\right) \left(- C + x^{4} - 20 x^{3} + 150 x^{2} - 500 x + 625\right)$ \\ \hline
    $4$ & $2$ & $20$ & $(C^{2} - 17 C x^{2} + 250 C x - 5625 C + 16 x^{4} - 1600 x^{3} + 60000 x^{2} - 1000000 x + 6250000)(C^{3} + 18 C^{2} x^{2} - 100 C^{2} x - 6250 C^{2} + 33 C x^{4} + 500 C x^{3} - 71250 C x^{2} + 562500 C x + 9765625 C + 16 x^{6} - 2400 x^{5} + 150000 x^{4} - 5000000 x^{3} + 93750000 x^{2} - 937500000 x + 3906250000)$ \\ \hline
    \end{tabular}
\end{table}

Let $f_c(z) = z^{d+2} + cz^2$. Related to $\Delta_{n,m}$, the values $\Res_x(\cyc_n, \delta_m)$ for integers $d,n,m \geq 1$ are explicitly computed as follows.
One can see that they are polynomials in $c^{d+1}$ as the other polynomials we studied in this paper.
Can the readers find out some patterns in the leading coefficients?

\begin{table}[h]
    \begin{tabular}{|c|c|c|c|p{11.7cm}|}
    \hline
    $d$ & $n$ & $m$ & Leading coefficient & $\Res_x(\cyc_n, \delta_m)$ for $f_c(z) = z^{d+2} + cz^2$\\ \hline \hline
    $2$ & $1$ & $1$ & $-2^2$ & $- 4 c^{3} - 27$ \\ \hline
$2$ & $1$ & $2$ & $2^{10}\cdot 3$ & $(12 c^{3} + 125)(256 c^{9} + 3328 c^{6} - 19872 c^{3} + 91125)$ \\ \hline
$2$ & $2$ & $1$ & $2^2 \cdot 3$ & $12 c^{3} + 125$ \\ \hline
$2$ & $2$ & $2$ & $2^{10}\cdot 5$ & $5120 c^{12} + 125184 c^{9} + 340608 c^{6} - 2967452 c^{3} + 24137569$ \\ \hline
$2$ & $3$ & $1$ & $2^4 \cdot 7$ & $112 c^{6} + 1980 c^{3} + 9261$ \\ \hline
$2$ & $3$ & $2$ & $2^{20}\cdot 3\cdot 7$ & $22020096 c^{24} + 1068761088 c^{21} + 15903686656 c^{18} + 47302946816 c^{15} - 293551023104 c^{12} + 3011458104576 c^{9} + 19218341274768 c^{6} - 104174224739676 c^{3} + 413976684737889$ \\ \hline \hline
$3$ & $1$ & $1$ & $-3^3$ & $- 27 c^{4} - 256$ \\ \hline
$3$ & $1$ & $2$ & $2^{12} \cdot 3^{23}$ & $14281868906496 (c^{4} + 16)^{4}(27 c^{4} + 256)$ \\ \hline
$3$ & $2$ & $1$ & $3^4$ & $81(c^{4} + 16)$ \\ \hline
$3$ & $2$ & $2$ & $2^{12}\cdot 3^8 \cdot 5$ & $4096(9 c^{4} + 169)^{3}(3645 c^{8} + 77328 c^{4} + 456976)$ \\ \hline
$3$ & $3$ & $1$ & $3^6 \cdot 7$ & $5103 c^{8} + 131517 c^{4} + 923521$ \\ \hline
$3$ & $3$ & $2$ & $2^4\cdot 3^{31} \cdot 7$ & $3486784401(27 c^{8} + 495 c^{4} + 2401)(144 c^{8} + 5196 c^{4} + 47089)^{3}(5103 c^{8} + 131517 c^{4} + 923521)$ \\ \hline \hline
$4$ & $1$ & $1$ & $-2^8$ & $- 256 c^{5} - 3125$ \\ \hline
$4$ & $1$ & $2$ & $2^{48}\cdot 3$ & $(768 c^{5} + 16807)(1099511627776 c^{25} + 157582350090240 c^{20} + 7170620588556288 c^{15} + 81096927846400000 c^{10} - 943965155000000000 c^{5} + 8620460479736328125)$ \\ \hline
$4$ & $2$ & $1$ & $2^8 \cdot 3$ & $768 c^{5} + 16807$ \\ \hline
$4$ & $2$ & $2$ & $2^{48} \cdot 5$ & $1407374883553280 c^{30} + 237348276553121792 c^{25} + 14256316535097262080 c^{20} + 331738425127665139712 c^{15} + 1211789484758147465216 c^{10} - 21435361197289473187072 c^{5} + 333446267951815307088493$ \\ \hline
$4$ & $3$ & $1$ & $2^{16}\cdot 7$ & $458752 c^{10} + 15544576 c^{5} + 147008443$ \\ \hline
$4$ & $3$ & $2$ & $2^{96}\cdot 3\cdot 7$ & Too long 
\\ \hline
    \end{tabular}
\end{table}

\newpage
\section{Tabless on the degrees of $\Delta_{n,m}$ for $f_c(z)=z^2+c$}
\label{subsec: lists_on_deg_of_Delta}

In this section, we give some tables that we used in \cref{subsec: Parabolic parameters under the irreducibility conjecture}
\begin{table}[b]
    \centering
    \begin{tabular}{|c|rrr|}\hline
        $n$ & $\nu(n)$ & $\varphi(n)$ & $d(n)$ \\ \hline
        1 & 1 & 1 & 1 \\
        2 & 1 & 1 & 0 \\
        3 & 3 & 2 & 1 \\
        4 & 6 & 2 & 3 \\
        5 & 15 & 4 & 11 \\
        6 & 27 & 2 & 20 \\
        7 & 63 & 6 & 57 \\
        8 & 120 & 4 & 108 \\
        9 & 252 & 6 & 240 \\
        10 & 495 & 4 & 472 \\
        11 & 1023 & 10 & 1013 \\
        12 & 2010 & 4 & 1959 \\
        13 & 4095 & 12 & 4083 \\
        14 & 8127 & 6 & 8052 \\
        15 & 16365 & 8 & 16315 \\
        16 & 32640 & 8 & 32496 \\
        17 & 65535 & 16 & 65519 \\
        18 & 130788 & 6 & 130464 \\
        19 & 262143 & 18 & 262125 \\
        20 & 523770 & 8 & 523209 \\
        21 & 1048509 & 12 & 1048353 \\
        22 & 2096127 & 10 & 2095084 \\
        23 & 4194303 & 22 & 4194281 \\
        24 & 8386440 & 8 & 8384100 \\
        25 & 16777200 & 20 & 16777120 \\
        26 & 33550335 & 12 & 33546216 \\
        27 & 67108608 & 18 & 67108068 \\
        28 & 134209530 & 12 & 134201223 \\
        29 & 268435455 & 28 & 268435427 \\
        30 & 536854005 & 8 & 536836484 \\
        31 & 1073741823 & 30 & 1073741793 \\
        32 & 2147450880 & 16 & 2147417952 \\
        33 & 4294966269 & 20 & 4294964173 \\
        34 & 8589869055 & 16 & 8589803488 \\
        35 & 17179869105 & 24 & 17179868739 \\
        36 & 34359605280 & 12 & 34359469848 \\
        37 & 68719476735 & 36 & 68719476699 \\
        38 & 137438691327 & 18 & 137438429148 \\
        39 & 274877902845 & 24 & 274877894595 \\
        40 & 549755289480 & 16 & 549754764132 \\
        41 & 1099511627775 & 40 & 1099511627735 \\
        42 & 2199022198821 & 12 & 2199021133728 \\
        43 & 4398046511103 & 42 & 4398046511061 \\
        44 & 8796090925050 & 20 & 8796088826787 \\
        45 & 17592186027780 & 24 & 17592185993904 \\
        46 & 35184367894527 & 22 & 35184363700180 \\
        47 & 70368744177663 & 46 & 70368744177617 \\
        48 & 140737479934080 & 16 & 140737471477920 \\
        49 & 281474976710592 & 42 & 281474976710172 \\ \hline
    \end{tabular}
    \caption{Some terms of sequences}
    \label{tab: terms}
\end{table}

\begin{table}[]
    \centering
    \begin{tabular}{|c|c|c|} \hline
         degree & $(m,n)$    & $\Delta_{n,m}(C/4)$ for $f_c(z) = z^2 + c$ \\\hline
         1      & $(1,1)$    & $C - 1$ \\
                & $(1,2)$    & $C + 3$ \\
                & $(2,4)$    & $-C - 5$ \\
                & $(3,3)$    & $C + 7$ \\ \hline
         2      & $(1,3)$    & $C^{2} + C + 7$\\
                & $(1,4)$    & $C^{2} - 2C + 5$\\
                & $(1,6)$    & $C^{2} - 3C + 3$\\
                & $(2,6)$    & $C^{2} + 9C + 21$\\
                & $(2,8)$    & $C^{2} + 8C + 17$\\
                & $(2,12)$   & $C^{2} + 7C + 13$\\\hline
        3       & $(3,6)$    & $C^{3} + 8C^{2} + 18C + 81$\\
                & $(4,4)$    & $C^{3} + 9C^{2} + 27C + 135$\\\hline
        4       & $(1,5)$    & $C^{4} + C^{3} + C^{2} - 9C + 31$ \\
                & $(1,8)$    & $C^{4} + 2C^{2} - 16C + 17$ \\
                & $(1,10)$   & $C^{4} - 3C^{3} + 9C^{2} - 17C + 11$ \\
                & $(1,12)$   & $C^{4} + 2C^{3} - C^{2} - 14C + 13$ \\
                & $(2,10)$   & $C^{4} + 17C^{3} + 108C^{2} + 313C + 341$ \\
                & $(2,16)$   & $C^{4} + 16C^{3} + 96C^{2} + 256C + 257$ \\
                & $(2,20)$   & $C^{4} + 15C^{3} + 85C^{2} + 215C + 205$ \\
                & $(2,24)$   & $C^{4} + 16C^{3} + 95C^{2} + 248C + 241$ \\ \hline
        5       & $\varnothing$ & $\varnothing$ \\\hline
        6       & $(1,7)$ & $C^6 + C^5 + C^4 + C^3 + 15C^2 - 97C + 127$\\
                & $(1,9)$ & $C^6 - 6C^4 + 7C^3 + 36C^2 - 102C + 73$ \\
                & $(1,14)$ & $C^6 - 3C^5 + 9C^4 - 27C^3 + 67C^2 - 89C + 43$ \\
                & $(1,18)$ & $C^6 + 6C^4 - 9C^3 + 36C^2 - 90C + 57$ \\
                & $(2,14)$ & $C^6 + 25C^5 + 261C^4 + 1457C^3 + 4589C^2 + 7737C + 5461$ \\
                & $(2,18)$ & $C^6 + 24C^5 + 240C^4 + 1281C^3 + 3852C^2 + 6192C + 4161$ \\
                & $(2,28)$ & $C^6 + 23C^5 + 221C^4 + 1135C^3 + 3285C^2 + 5079C + 3277$ \\
                & $(2,36)$ & $C^6 + 24C^5 + 240C^4 + 1279C^3 + 3828C^2 + 6096C + 4033$ \\
                & $(3,9)$ & $C^6 + 16C^5 + 98C^4 + 415C^3 + 1436C^2 + 2482C + 5329$ \\
                & $(3,12)$ & $C^6 + 16C^5 + 96C^4 + 382C^3 + 1268C^2 + 2080C + 4225$ \\
                & $(3,18)$ & $C^6 + 16C^5 + 94C^4 + 351C^3 + 1116C^2 + 1710C + 3249$ \\
                & $(4,8)$ & $-C^6 - 12C^5 - 47C^4 - 188C^3 - 527C^2 - 4913$ \\ \hline
        7       & $\varnothing$ & $\varnothing$ \\ \hline
    \end{tabular}
    \caption{$\Delta_{n,m}(c)$ with given degree}
    \label{tab: Delta_mn}
\end{table}


\newpage
\section{Exact description of Newton polygons for $z^{d+1} + cz$ and $z^{d+1} - cz + c$} \label{sec: Newton_polygon}


For a positive integer $ n $ and $ f(z) = z^{d+1} + cz $ or $ f(z) = z^{d+1} - cz + c $, we show Newton polygons of $ f^{\circ n} (z) $ in \cref{fig: Newton_polygon_z_to_d+1_+cz,fig: Newton_polygon_z_to_d+1_c_z_to_d_+c}.
The proof is by induction on $n$.
\begin{figure}[h]
	\begin{minipage}[bthp]{0.45\linewidth}
		\centering
		\includegraphics[width = 7.5cm]{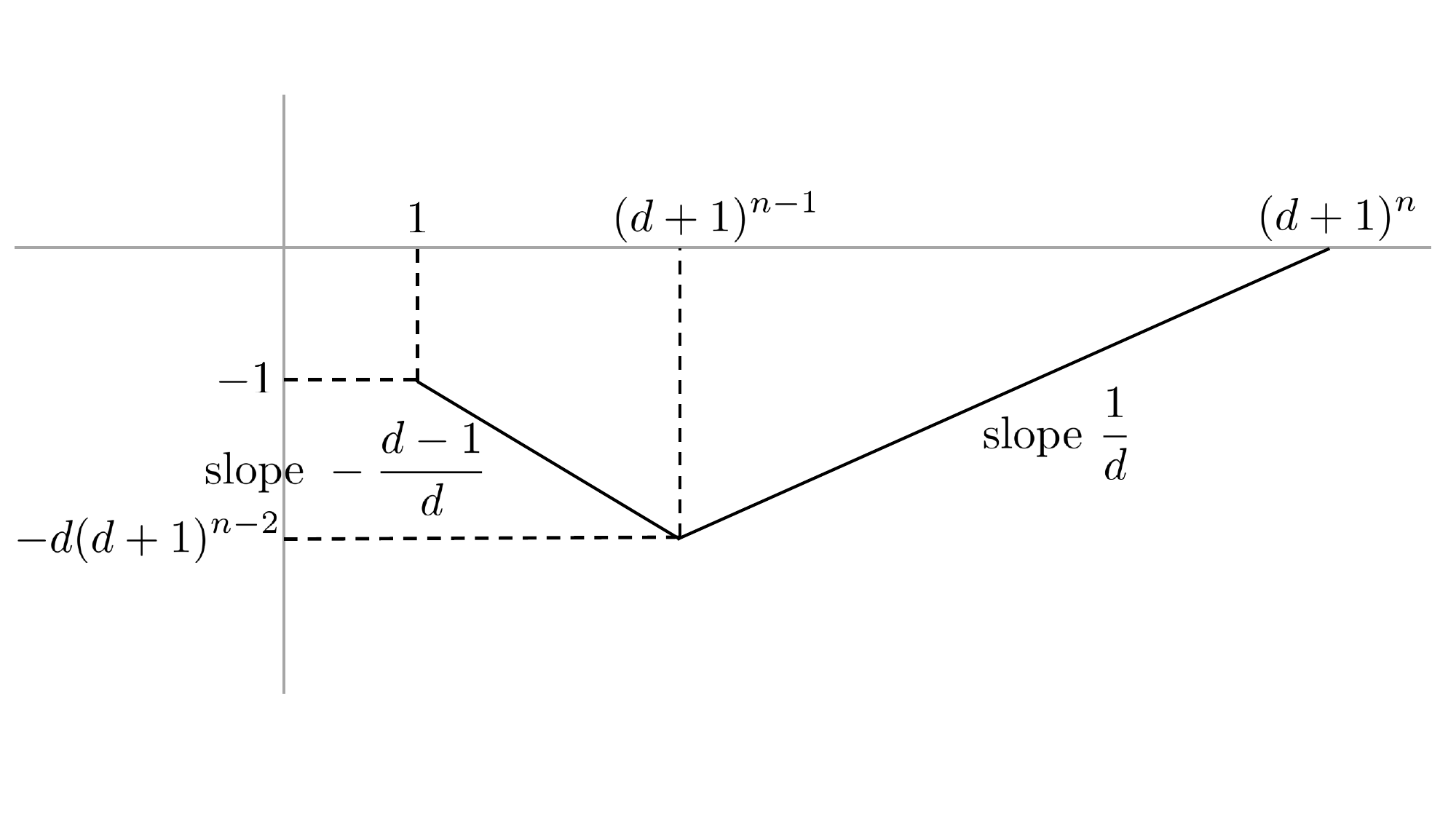}
		\caption{The Newton polygon of $ f^{\circ n}(z) $ for $ f(z) = z^{d+1} + cz $.}
		\label{fig: Newton_polygon_z_to_d+1_+cz}
	\end{minipage}		
	\begin{minipage}[bthp]{0.45\linewidth}
		\centering
		\includegraphics[clip,width = 7.5cm]{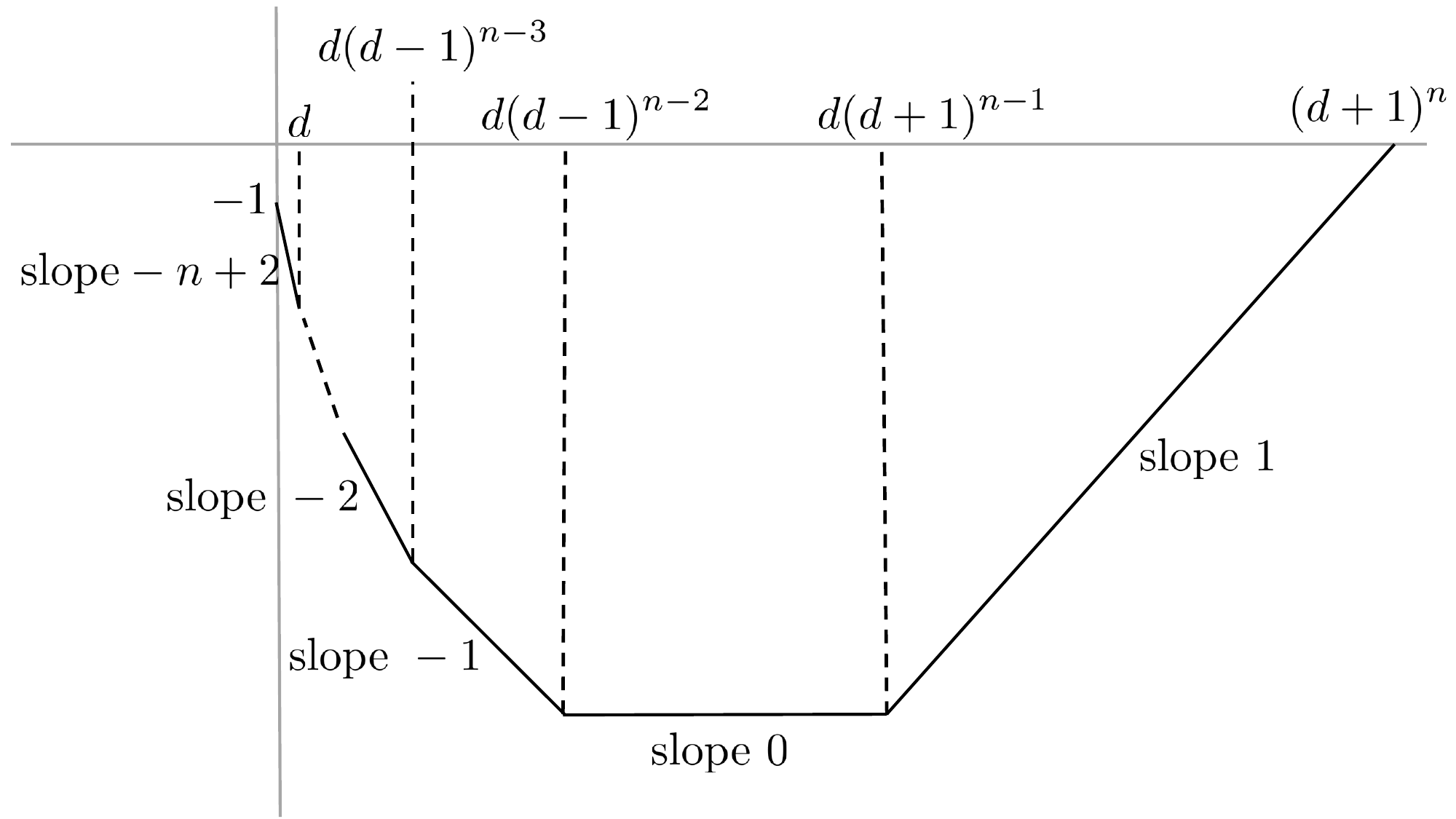}
		\caption{The Newton polygon of $ f^{\circ n}(z) $ for $ f(z) = z^{d+1} - cz + c $.}
		\label{fig: Newton_polygon_z_to_d+1_c_z_to_d_+c}
	\end{minipage}
\end{figure}

\newpage
\bibliographystyle{alpha}
\bibliography{dynamic}

\end{document}